\numberwithin{equation}{section}
\def\ga{\mathfrak{a}}
\def\gh{\mathfrak{h}}
\def\gk{\mathfrak{k}}
\def\gl{\mathfrak{l}}
\def\gs{\mathfrak{s}}
\def\gS{\mathfrak{S}}
\newcommand{\Wn}{W (\fg_n, \fh_{n})}
\renewcommand{\Im}{\mathop{\rm Im} }
\def\C{\mathbb{C}}
\def\H{\mathbb{H}}
\def\N{\mathbb{N}}
\def\R{\mathbb{R}}
\def\Z{\mathbb{Z}}
\newcommand{\id}{\mathrm{id}}
\newcommand{\SO}{\mathrm{SO}}
\newcommand{\Sp}{\mathrm{Sp}}
\newcommand{\SU}{\mathrm{SU}}
\newcommand{\U}{\mathrm{U}}
\newcommand{\diag}{\mathrm{diag}}
\newcommand{\so}{\mathfrak{so}}
\newcommand{\lsp}{\mathfrak{sp}}
\newcommand{\lsl}{\mathfrak{sl}}
\def\cF{\mathcal{F}}
\newcommand{\PW}{\mathrm{PW}}
\newtheorem{theorem}[equation]{Theorem}
\newtheorem{lemma}[equation]{Lemma}
\newtheorem{remark}[equation]{Remark}
\newcommand{\fa}{\mathfrak{a}}
\newcommand{\fg}{\mathfrak{g}}
\newcommand{\fh}{\mathfrak{h}}
\newcommand{\fk}{\mathfrak{k}}
\newcommand{\fl}{\mathfrak{l}}
\newcommand{\fm}{\mathfrak{m}}
\newcommand{\fn}{\mathfrak{n}}
\newcommand{\fp}{\mathfrak{p}}
\newcommand{\fs}{\mathfrak{s}}
\newcommand{\rI}{\mathrm{I}}
\newcommand{\wts}{\widetilde{\sigma}}
\newcommand{\wtW}{\widetilde{W}}
\def\sideremark#1{\ifvmode\leavevmode\fi\vadjust{\vbox to0pt{\vss
 \hbox to 0pt{\hskip\hsize\hskip1em
\vbox{\hsize2cm\tiny\raggedright\pretolerance10000
 \noindent #1\hfill}\hss}\vbox to8pt{\vfil}\vss}}}%
\begin{document}

\title[Restriction of Weyl Group Invariants]{Extension of Symmetric Spaces and Restriction of Weyl Groups and Invariant Polynomials}

\author{Gestur \'{O}lafsson}
\address{Department of Mathematics, Louisiana State University, Baton Rouge,
LA 70803, U.S.A.}
\email{olafsson@math.lsu.edu}
\thanks{The research of G. \'Olafsson was supported by NSF grants
DMS-0402068 and DMS-0801010}

\author{Joseph A. Wolf}
\address{Department of Mathematics, University of California, Berkeley,
CA 94707, U.S.A.}
\email{jawolf@math.berkeley.edu}
\thanks{The research of J. A. Wolf was partially supported by NSF grant
DMS-0652840}

\subjclass[2000]{43A85, 53C35, 22E46}
\keywords{Invariant polynomials; Extension of symmetric spaces;
Spherical Fourier transform; Paley-Wiener theorem; Invariant differential operators.}

\date{}

\begin{abstract}
Polynomial invariants are fundamental objects in analysis on
Lie groups and symmetric spaces. Invariant differential
operators on symmetric spaces are described by Weyl group
invariant polynomial.
In this article we give a simple criterion that
ensure that the restriction of invariant polynomials
to subspaces is surjective.  In another paper we will apply our criterion to
problems in Fourier analysis on projective/injective limits,
specifically to theorems of Paley--Wiener type.
\end{abstract}
\maketitle

\section*{Introduction} \label{sec0}
\setcounter{equation}{0}

\noindent
Invariant polynomials play a fundamental role in several branches of
mathematics. In this paper we set up the invariant theory needed for
our paper \cite{OW0} on Paley--Wiener theory for injective limits of
Riemannian symmetric spaces.  We also describe that theory, leaving
the proofs of our Paley--Wiener theorems to \cite{OW0}.

Let $G$ be a connected semisimple real Lie group
with Lie algebra $\fg$. Let $\fh\subset \fg$ be a Cartan subalgebra.
Then the algebra of $G$--invariant polynomials on $\fg$ is isomorphic
to the center of the universal enveloping algebra of $\fg$, and
restriction of invariant polynomials to $\fh$ is an isomorphism onto the
algebra of Weyl group invariant polynomials on $\fh$. Replace $G$ by
a Riemannian symmetric space $M=G/K$ corresponding to a
Cartan involution $\theta$ and replace $\fh$  by
a maximal abelian subspace $\fa$ in $\fs:=\{X\in\fg\mid \theta (X)=-X\}$. Then
the Weyl group invariant polynomials correspond to the invariant
differential operators on $M$. They are therefore closely related to
harmonic analysis on $M$, in particular to the determination of
the spherical functions on $M$.

In general we need $\fa \subset \fh$ and $\theta\fh = \fh$.  For this, of
course, we need only choose $\fh$ to be a Cartan subalgebra of the centralizer
of $\fa$.

Denote by $W(\fg,\fh)$ the Weyl group of $\fg$ relative to $\fh$,
$W(\fg,\fa)$ the ``baby'' Weyl group of $\fg$ relative to $\fa$,
$W_\fa (\fg,\fh)=\{w\in W(\fg,\fh)\mid w(\fa )= \fa\}$, $\rI(\fg )$
the algebra of $W(\fg,\fh)$--invariant polynomials on $\fh$ and
finally $\rI(\fa )$ the algebra of $W(\fg,\fa)$--invariant polynomials on
$\fa$.   It is well known for all semisimple
Lie algebras that $W_\fa (\fg,\fh)|_{\fa}=W (\fg,\fa)$.  In \cite{He1964}
Helgason showed for all classical semisimple Lie algebras
that $\rI(\fh)|_\fa = \rI(\fa)$. As an application, this shows that in most
cases the invariant differential operators on $M$ come from elements
in the center of the universal enveloping algebra of $\fg$.

In this article we discuss similar restriction problems
for the case of pairs of Lie groups $G_n\subset G_k$ and
symmetric spaces $M_n\subset M_k$. We use the above notation with
indices ${}_n$ respectively ${}_k$. The first question is
about restriction from $\fh_k$ to $\fh_n$. It is clear that neither
does the group $W_{\fh_n}(\fg_k,\fh_k)$ restrict to $W (\fg_n,\fh_n)$ in
general, nor is $\rI(\fh_k)|_{\fh_n}=\rI (\fh_n)$. To make this work, we
introduce the notion that $\fg_k$ is a propagation of $\fg_n$ using
the Dynkin diagram of simple Lie classical Lie algebras.

In terms of restricted
roots, propagation means that either the rank and restricted root system
of the large and the small symmetric spaces are the same, or
roots are added to the left end of the Dynkin diagram.  The result is
that both symmetric spaces have the same type of root system but the
larger one can have higher rank. In that case the restriction
result above holds for all cases except when the restricted root systems
are of type $D$. This includes
all the cases of classical Lie groups of the same type. If $G_k$ is
a propagation of $G_n$, then $W_{\fh_n}(\fg_k,\fh_k)|_{\fh_n}=W (\fg_n,\fh_n)$ and
$\rI(\fh_n)|_{\fh_n}=\rI (\fh_n)$, except in the case of simple algebras
of type $D$, where a parity condition is needed, i.e., we have to extend
the Weyl group by incorporating odd sign changes for simple factors of type $D$.
The resulting finite group is denoted by $\widetilde{W} (\fg,\fh)$. Then,
in all classical cases, the
$\widetilde{W}(\fg_k,\fh_k)$-invariant polynomials restrict to
$ \widetilde{W} (\fg_n,\fh_n)$-invariant polynomials. We also
show that $\widetilde{W}_\fa(\fg,\fh)|_\fa = \widetilde{W}(\fg,\fa)$.

In Section \ref{sec2} we introduce the notion of
propagation and examine the corresponding invariants explicitly for each
type of root system.  The main result, Theorem \ref{th-AdmExt},
summarizes the facts on restriction of Weyl groups for
propagation of symmetric spaces. The proof is by case by case consideration
of each simple root system.

In Section \ref{sec3} we prove surjectivity of Weyl group invariant
polynomials for propagation of symmetric spaces.  As mentioned above,
this is analogous to Helgason's result on restriction of invariants from
the full Cartan $\fh$ of $\fg$ to the Cartan $\fa$ of $(\fg,\fk)$.

In Section \ref{sec1} we indicate some applications of our results on 
Weyl group invariants to
Fourier analysis on Riemannian symmetric spaces of noncompact type. 
This includes applications to the Fourier transform of compactly supported 
functions and the Paley-Wiener theorem as well as applications to invariant 
differential operators and related differential equations on symmetric 
spaces and their inductive limits.

\section{Restriction of Invariants for Classical Simple Lie Algebras}
\label{sec2}
\noindent
In this section we discuss restriction of polynomial functions invariant under a Weyl group of
classical type, i.e., a finite reflection group associated to a classical root system. Those can be concretely realized as permutation groups extended by a group of sign changes.

Let $\fg_n$ be a simple Lie algebra of classical type
and let $\fh_n\subset \fg_n$ be a Cartan subalgebra. Let
$\Delta_n = \Delta(\fg_n, \fh_n)$ be the set of roots of $\fh_{n,\C}$ in
$\fg_{n,\C}$ and $\Psi_n = \Psi(\fg_n, \fh_n)$ a set of simple roots. We label
the corresponding
Dynkin diagram so that $\alpha_1$ is the \textit{right} endpoint.
If $\fg_n\subseteqq \fg_k$ then we chose $\fh_n$ and $\fh_k$ so
that $\fh_n=\fg_n\cap \fh_k$.
We say that $\fg_k$ \textit{propagates} $\fg_n$, if
$\Psi_k$ is constructed from $\Psi_n$ by adding simple roots to the \textit{left} end
of the Dynkin diagrams:
\begin{equation}\label{rootorder}
\begin{aligned}
&\begin{tabular}{|c|l|c|}\hline
$\Psi_k=A_k$&
\setlength{\unitlength}{.5 mm}
\begin{picture}(155,18)
\put(5,2){\circle{2}}
\put(2,5){$\alpha_{k}$}
\put(6,2){\line(1,0){13}}
\put(24,2){\circle*{1}}
\put(27,2){\circle*{1}}
\put(30,2){\circle*{1}}
\put(34,2){\line(1,0){13}}
\put(48,2){\circle{2}}
\put(45,5){$\alpha_n$}
\put(49,2){\line(1,0){23}}
\put(73,2){\circle{2}}
\put(70,5){$\alpha_{n-1}$}
\put(74,2){\line(1,0){23}}
\put(98,2){\circle{2}}
\put(95,5){$\alpha_{n-2}$}
\put(99,2){\line(1,0){13}}
\put(117,2){\circle*{1}}
\put(120,2){\circle*{1}}
\put(123,2){\circle*{1}}
\put(129,2){\line(1,0){13}}
\put(143,2){\circle{2}}
\put(140,5){$\alpha_1$}
\end{picture}
&$k\geqq n \geqq 1$
\\
\hline
\end{tabular}\\
&\begin{tabular}{|c|l|c|}\hline
$\Psi_k=B_k$&
\setlength{\unitlength}{.5 mm}
\begin{picture}(155,18)
\put(5,2){\circle{2}}
\put(2,5){$\alpha_{k}$}
\put(6,2){\line(1,0){13}}
\put(24,2){\circle*{1}}
\put(27,2){\circle*{1}}
\put(30,2){\circle*{1}}
\put(34,2){\line(1,0){13}}
\put(48,2){\circle{2}}
\put(45,5){$\alpha_n$}
\put(49,2){\line(1,0){23}}
\put(73,2){\circle{2}}
\put(70,5){$\alpha_{n-1}$}
\put(74,2){\line(1,0){13}}
\put(93,2){\circle*{1}}
\put(96,2){\circle*{1}}
\put(99,2){\circle*{1}}
\put(104,2){\line(1,0){13}}
\put(118,2){\circle{2}}
\put(115,5){$\alpha_2$}
\put(119,2.5){\line(1,0){23}}
\put(119,1.5){\line(1,0){23}}
\put(143,2){\circle*{2}}
\put(140,5){$\alpha_1$}
\end{picture}
&$k\geqq n \geqq 2$\\
\hline
\end{tabular} \\
&\begin{tabular}{|c|l|c|}\hline
$\Psi_k=C_k$ &
\setlength{\unitlength}{.5 mm}
\begin{picture}(155,18)
\put(5,2){\circle*{2}}
\put(2,5){$\alpha_{k}$}
\put(6,2){\line(1,0){13}}
\put(24,2){\circle*{1}}
\put(27,2){\circle*{1}}
\put(30,2){\circle*{1}}
\put(34,2){\line(1,0){13}}
\put(48,2){\circle*{2}}
\put(45,5){$\alpha_n$}
\put(49,2){\line(1,0){23}}
\put(73,2){\circle*{2}}
\put(70,5){$\alpha_{n-1}$}
\put(74,2){\line(1,0){13}}
\put(93,2){\circle*{1}}
\put(96,2){\circle*{1}}
\put(99,2){\circle*{1}}
\put(104,2){\line(1,0){13}}
\put(118,2){\circle*{2}}
\put(115,5){$\alpha_2$}
\put(119,2.5){\line(1,0){23}}
\put(119,1.5){\line(1,0){23}}
\put(143,2){\circle{2}}
\put(140,5){$\alpha_1$}
\end{picture}
& $k\geqq n \geqq 3$
\\
\hline
\end{tabular}\\
&\begin{tabular}{|c|l|c|}\hline
$\Psi_k=D_k$ &
\setlength{\unitlength}{.5 mm}
\begin{picture}(155,20)
\put(5,9){\circle{2}}
\put(2,12){$\alpha_{k}$}
\put(6,9){\line(1,0){13}}
\put(24,9){\circle*{1}}
\put(27,9){\circle*{1}}
\put(30,9){\circle*{1}}
\put(34,9){\line(1,0){13}}
\put(48,9){\circle{2}}
\put(45,12){$\alpha_n$}
\put(49,9){\line(1,0){23}}
\put(73,9){\circle{2}}
\put(70,12){$\alpha_{n-1}$}
\put(74,9){\line(1,0){13}}
\put(93,9){\circle*{1}}
\put(96,9){\circle*{1}}
\put(99,9){\circle*{1}}
\put(104,9){\line(1,0){13}}
\put(118,9){\circle{2}}
\put(113,12){$\alpha_3$}
\put(119,8.5){\line(2,-1){13}}
\put(133,2){\circle{2}}
\put(136,0){$\alpha_1$}
\put(119,9.5){\line(2,1){13}}
\put(133,16){\circle{2}}
\put(136,14){$\alpha_2$}
\end{picture}
& $k\geqq n \geqq 4$
\\
\hline
\end{tabular}
\end{aligned}
\end{equation}

Let $\fg$ and ${}'\fg\subset \fg$ be semisimple Lie algebras. Then
$\fg$ \textit{propagates} ${}'\fg$
if we can number the simple ideals $\fg_j$, $j=1,2, \ldots ,r$, in $\fg$ and
the simple ideals ${}'\fg_i$, $i = 1, 2, \dots , s$, in ${}'\fg$, so that
$\fg_j$ propagates ${}'\fg_j$ for $j=1,\ldots ,s$.

When $\fg_k$ propagates $\fg_n$ as above, they have Cartan subalgebras
$\fh_k$ and $\fh_n$ such that $\fh_n\subseteqq \fh_k$, and we have
choices of root order such that
\[\text{if } \alpha \in \Psi_n \text{ then there is a unique }
\alpha' \in \Psi_k \text{ such that } \alpha'|_{\fh_n} = \alpha.\]
It follows that
\[\Delta_n \subseteqq \{\alpha|_{\fh_n}\mid \alpha\in \Delta_k \text{ and }
\alpha|_{\fh_n}\not= 0\}\, .\]

For a Cartan subalgebra $\fh_\C$ in a semisimple complex Lie algebra $\fg_\C$
denote by $\fh_\R$ the Euclidean vector space
\[
\fh_\R=\{X\in\fh_\C\mid
\alpha (X) \in \R \text{ for all } \alpha \in \Delta (\fg_\C,\fh_\C)\}\, .
\]

We now discuss case by case the classical simple Lie algebras
and how the Weyl group and the invariants behave under
propagation. The result will be collected in
Theorem \ref{th-AdmExt} below. The corresponding result for
Riemannian symmetric spaces is Theorem \ref{th-AdmExtG/K}.

For $s\in \N$ identify $\R^s$ with its dual. Let $f_1= (0, 0, \ldots ,0,1) $,
\ldots , $f_s=(1,0,0,\ldots ,0)$ be the standard
basis for $\R^s$.  This enumeration is opposite to the usual one.  We write
\[x=x_1f_1+\ldots +x_s f_s=(x_s,\ldots ,x_1)\]
to indicate that in the following we will be adding zeros to the left
to adjust for our numbering in the Dynkin diagrams. We use the discussion in
\cite[p. 293]{V1974} as a reference for the realization of
the classical Lie algebras.

When $\fg$ is a classical simple Lie algebra of rank $n$ we write $\pi_n$ for the
defining representation and
\[F_n(t,X):=\det (t+\pi_n (X))\, .\]
We denote by the same letter the restriction of $F_n(t, \cdot )$ to $\fh_n$.
In this section only we use the following simplified notation:
$W_k = W(\fg_k, \fh_k)$ denotes the usual Weyl group of the pair $(\fg_k,\fh_k)$ and
$W_{k,n}=
W_{\fh_{n,\R}}(\fg_k, \fh_{k})=\{w\in W_k\mid w(\fh_{n,\R})=\fh_{n,\R}\}$
is the subgroup with well defined restriction to $\fh_n$.

\noindent{\textbf{The case
$\mathbf{A_k}$, where $\mathbf{\fg_k =\mathfrak{sl}(k+1,\C)}$.}} In this case
\begin{equation}\label{an}
\fh_{k,\R}=\{(x_{k+1},\ldots ,x_{1})\in \R^{k+1}\mid x_1+\ldots +x_{k+1}=0\}\, ,
\end{equation}
where $x\in \R^{k+1}$ corresponds to the diagonal matrix
\[x\leftrightarrow \mathrm{diag}(x):=\left ( \begin{smallmatrix} x_{k+1} &0 &
\ldots &0 \cr 0 &x_{k} & & \cr & &  \ddots & \cr &&& x_1\end{smallmatrix}\right )\]
Then $\Delta =\{ f_i-f_j\mid 1\leqq i\not= j\leqq k+1\}$ where $f_\ell$
maps a diagonal matrix to its $\ell^{th}$ diagonal element.  Here
$W (\fg_k,\fh_k)$ is the symmetric group $\gS_{k+1}$, all permutations of
$\{1, \dots , k+1\}$, acting on the $\fh_{k}$ by
\[\sigma\cdot (x_{k+1},\ldots ,x_{1})=
(x_{\sigma^{-1}(k+1)},\ldots ,x_{\sigma^{-1}(1)})\, .\]
We will use the simple root system
\[\Psi (\fg_k,\fh_k) =\{f_j-f_{j-1}\mid j=2,\ldots ,k+1\}\, .\]
The analogous notation will be used for $A_n$. In particular, denoting
the zero vector of length $j$ by $0_j$, we have
\begin{equation}\label{eq-hnInhk}
\fh_{n,\R}=\left \{ (0_{k-n},x_{n+1},\ldots ,x_1)\mid  x_j\in\R\quad
\text{and}\quad \sum_{j=1}^{n+1}x_j=0 \right \}\subset \fh_{k,\R}\, .
\end{equation}
This corresponds to the embedding
\[\mathfrak{sl} (n,\C)\hookrightarrow \mathfrak{sl} (k,\C)\, ,\quad X\mapsto
\left ( \begin{smallmatrix} 0_{k-n,k-n} & 0\cr
0 & X\end{smallmatrix}\right )\, .\]
It follows that
\[W_{k,n}=\gS_{k-n}\times \gS_{n+1}\, .\]
Hence $W_{k,n}|_{\fh_{n,\R}}=W (\fg_n,\fh_n)$
and the kernel of the restriction map is the
first factor $\gS_{k-n}$.

According to \cite[Exercise 58, p. 410]{V1974} we have
\[F_k(t,X)=\prod_{j=1}^{k+1}(t+x_j)=
t^{k+1}+\sum_{ \nu=1}^{k+1} p_{k, \nu} (X)t^{\nu -1}\, .\]
The polynomials $p_{k,\nu}$ generate $\rI_{W (\fg_k,\fh_k)}(\fh_{k,\R})$.
By (\ref{eq-hnInhk}), if $X=(0_{k-n},x)\in \fh_{n,\R}$, then
$$
\begin{aligned}
F_k(t,(0_{k-n},x))&= t^{k+1}+\sum_{\nu=1}^{k+1} p_{k, \nu} (X)t^{\nu -1}
     = t^{k-n}\det (t+\pi_{n}(x))\\
&= t^{k-n}(t^{n+1} +\sum_{\nu =1}^{n+1} p_{n,\nu }(x)t^{\nu -1})
     = t^{k+1}+\sum_{\nu =k-n+1}^{k+1} p_{n,\nu +n-k}(x)t^{\nu -1}\, .
\end{aligned}
$$
Hence
\[p_{k,\nu}|_{\fh_{n,\R}}= p_{n,\nu +n-k} \text{ for }
k-n+1 \leqq \nu \leqq k\]
and
\[p_{k,\nu}|_{\fh_{n,\R}}=0 \text{ for } 1 \leqq \nu \leqq k-n\, .\]
In particular the restriction map $\rI_{W (\fg_k,\fh_k)}(\fh_{k,\R})\to \rI_{W (\fg_n,\fh_n)}(\fh_{n,\R})$
is surjective.

\noindent
\textbf{The case $\mathbf{B_k}$, where $\mathbf{\fg_k=\so (2k+1,\C)}$.} In this case
$\fh_{k,\R}=\R^k$ where $\R^k$ is embedded into $\so (2k+1,\C)$ by
\begin{equation}\label{bn}
x\mapsto \left ( \begin{smallmatrix} 0 & 0 &
   0\cr 0 &\mathrm{diag}(x)& 0\cr 0 & 0 &-\mathrm{diag}(x)\end{smallmatrix}\right )\, .
\end{equation}
Here $\Delta_k=\{\pm (f_i\pm f_j)  \mid 1\leqq j < i \leqq k\}\bigcup
\{\pm f_1,\ldots ,\pm  f_k\}$ and we have the positive system
$\Delta_k^+=\{f_i\pm f_j\mid 1\leqq j< i\leqq k\}\bigcup \{f_1,\ldots ,f_k\}$.
The simple root system is
$\Psi = \Psi(\fg_k,\fh_k) = \{\alpha_1, \dots , \alpha_k\}$ where
\[\text{the simple root } \alpha_1=f_1 \text{, and } \alpha_j=f_{j}-f_{j-1}
\text{ for } 2 \leqq j \leqq k.\]
In this case the Weyl group $W (\fg_k,\fh_k)$ is the semidirect product
$\gS_k\rtimes \{1,-1\}^k$, where
$\gS_k$ acts as before and
\[\{1,-1\}^k\cong (\Z /2\Z )^k=\{\mathbf{\epsilon}=(\epsilon_k,\ldots ,\epsilon_1)\mid \epsilon_j=\pm 1\}\]
acts by sign changes,
$\mathbf{\epsilon}\cdot x=(\epsilon_k x_k,\ldots ,\epsilon_1 x_1)\, .$
Similar notation holds for $\fh_{n,\R}$. Our embedding of
$\fh_{n,\R}\hookrightarrow \fh_{k,\R}$ corresponds to the (non-standard) embedding of
$\so (2n+1,\C)$ into $\so (2k+1,\C)$ given by
\[\begin{pmatrix} 0 & a & b \cr -b^t & A & B\cr -a^t & C & -A^t\end{pmatrix}
\mapsto \left ( \begin{smallmatrix}0 & 0_{k-n} & a & 0_{k-n} & b  \cr 0_{k-n}^t & 0  & 0 & 0 &
0 \cr
-b^t & 0  & A & 0 & B\cr
0_{k-n}^t & 0  & 0 & 0 &
0\cr
-a^t & 0 & C & 0 & -A^t
\end{smallmatrix} \right )
\]
where the zeros stands for the zero matrix of the obvious size and we use
the realization from \cite[p. 303]{V1974}.  Here we see that
\[W_{k,n}=(\gS_{k-n}\rtimes \{1,-1\}^{k-n})\times
(\gS_n\rtimes \{1,-1\}^n)\, .\]
Thus $W_{k,n}|_{\fh_{n,\R}}= W (\fg_n,\fh_n)$ and the kernel
of the restriction map is
$\gS_{k-n}\rtimes \{1,-1\}^{k-n}$.

For the invariant polynomials we have, again using
\cite[Exercise 58, p. 410]{V1974}, that
\[F_k(t,X)= \det(t + \pi_k(X)) =
t^{2k+1}+\sum_{\nu =1}^k p_{k,\nu} (X)t^{2\nu -1}\]
and the polynomials $ p_{k,\nu}$ freely generate $\rI_{W(\fg_k,\fh_k)}(\fh_{k,\R})$.
For $X \in \fh_k$,  $F_k(t,X)$ is given by
$t\prod_{j=1}^n(t+x_j)(t-x_j)=t\prod_{j=1}^n(t^2-x_j^2)$.
Arguing as above we have
for $X=(0_{k-n},x)\in \fh_{n,\R}\subseteqq \fh_{k,\R}$:
$$
\begin{aligned}
F_k(t,(0_{k-n},x))&=t^{2k+1}+\sum_{\nu=1}^k p_{k, \nu} (X)t^{2\nu -1}
  = t^{2(k-n)}\det (t+\pi_{n}(x))\\
&=t^{2(k-n)}(t^{2n+1} +\sum_{\nu =1}^n p_{n,\nu }(x)t^{2\nu -1})
  = t^{2k+1}+\sum_{\nu =k-n+1}^k p_{n,\nu +n-k}(x)t^{2\nu -1}\, .
\end{aligned}
$$
Hence
\[p_{k,\nu}|_{\fh_{n,\R}}= p_{n,\nu +n-k} \text{ for }
k-n+1 \leqq \nu \leqq k \]
and
\[p_{k,\nu}|_{\fh_{n,\R}}=0 \text{ for } 1 \leqq \nu \leqq k-n\, .\]
In particular, the restriction map $\rI_{W (\fg_k,\fh_k)}(\fh_{k,\R})\to \rI_{W (\fg_n,\fh_n)}(\fh_{n,\R})$
is surjective.

\noindent
\textbf{The case $\mathbf{C_k}$, where $\mathbf{\fg_k=\lsp (k,\C)}$.} Again
$\fh_{k,\R}=\R^k$ embedded in $\lsp (k,\C)$ by
\begin{equation}\label{cn}
x\mapsto \begin{pmatrix} \diag (x) & 0 \cr 0 &-\diag (x)\end{pmatrix}\, .
\end{equation}
In this case
$\Delta_k=\{\pm (f_i\pm f_j)  \mid 1\leqq j < i \leqq k\}\bigcup \{\pm 2f_1,\ldots ,\pm 2f_k\}\, .$
Take $\Delta_k^+=\{f_i-f_j\mid 1\leqq j< i\leqq k\}\cup \{2f_1,\ldots ,2 f_k\}$
as a positive system. Then the simple root system
$\Psi = \Psi (\fg_k,\fh_k) = \{\alpha_1 , \dots , \alpha_k\}$ is given by
\[\text{the simple root } \alpha_1=2f_1 \text{, and } \alpha_j=f_{j}-f_{j-1}
\text{ for } 2 \leqq j \leqq k.\]
The Weyl group $W(\fg_k,\fh_k)$ is again $\gS_k\rtimes \{1,-1\}^k$ and
\[W_{k,n}=(\gS_{k-n}\rtimes \{1,-1\}^{k-n})\times (\gS_n\rtimes\{1,-1\}^n)\, .\]
Thus, $W_{k,n}|_{\fh_{n,\R}}= W (\fg_n,\fh_n)$ and the kernel of the restriction map is
$\gS_{k-n}\rtimes \{1,-1\}^{k-n}$.

For the invariant polynomials we have, again using
\cite[Exercise 58, p. 410]{V1974}, that
\[F_k(t,X)=
t^{2k}+\sum_{\nu =1}^k p_{k,\nu} (X)t^{2(\nu -1)} =
\prod_{j=1}^n (t^2-x_j^2)\]
and the $ p_{k,\nu}$ freely generate $\rI_{W (\fg_k,\fh_k)}(\fh_{k,\R})$.
We embed $\lsp (n,\C)$ into $\lsp (k,\C)$ by
\[\begin{pmatrix} A & B \cr C & -A^t\end{pmatrix}
\mapsto \left (\begin{smallmatrix} 0_{k-n,k-n} & 0 & 0  & 0 \cr
0& A &0 & B\cr
0 & 0 & 0_{k-n,k-n} & 0\cr
0 & C& 0 &-A^t\end{smallmatrix}\right )\]
where as usual $0$ stands for a zero matrix of the correct size.
Then
$$
\begin{aligned}
F_k(t,(0_{k-n},x))&= t^{2k}+\sum_{\nu=1}^k p_{k, \nu} (X)t^{2(\nu -1)}
  = t^{2(k-n)}\det (t+\pi_{n}(x))\\
&= t^{2(k-n)}(t^{2n} +\sum_{\nu =1}^n p_{n,\nu }(x)t^{2(\nu -1)})
  = t^{2k}+\sum_{\nu =k-n+1}^k p_{n,\nu +n-k}(x)t^{2(\nu -1)}\, .
\end{aligned}
$$
Hence
\[p_{k,\nu}|_{\fh_{n,\R}}= p_{n,\nu +n-k} \text{ for }
k-n+1 \leqq \nu \leqq k\]
and
\[p_{k,\nu}|_{\fh_{n,\R}}=0 \text{ for } 1 \leqq \nu \leqq k-n\, .\]
In particular, the restriction map
$\rI_{W (\fg_k,\fh_k)}(\fh_{k,\R})\to \rI_{W (\fg_n,\fh_n)}(\fh_{n,\R})$ is surjective.

\noindent
\textbf{The case $\mathbf{D_k}$, where $\mathbf{\fg_k=\so (2k,\C)}$.} We take
$\fh_{k,\R}=\R^k$ embedded in $\so (2k,\C)$ by
\begin{equation}\label{dn}
x\mapsto \begin{pmatrix} \diag(x) & 0 \cr 0 & -\diag (x)\end{pmatrix}\, .
\end{equation}
Then
$\Delta_k = \{\pm (f_i\pm f_j)\mid 1\leqq j<i\leqq k\}$ and we use the simple
root system $\Psi (\fg_k,\fh_k)= \{\alpha_1 , \dots , \alpha_k\}$ given by
\[ \alpha_1 = f_1+f_2, \text{ and } \alpha_i = f_i-f_{i-1} \text{ for }
2 \leqq i \leqq k\]
The Weyl group is
$W (\fg_k,\fh_k)=\gS_k\rtimes
  \{\mathbf{\epsilon}\in \{1,-1\}^k\mid \epsilon_1 \cdots \epsilon_=1\}\, .$
In other words the elements of $W(\fg_k,\fh_k)$ contain only an
\textit{even} number of sign-changes.  The invariants are given by
\[F_k (t,X)
= t^{2k}+\sum_{\nu =2}^{k} p_{k,\nu }(X) t^{2(\nu-1)}+p_{k,1}(X)^2
= \prod_{\nu =1}^n (t^2-x_j^2)\]
where $p_1$ is the Pfaffian, $p_1(X)=(-1)^{k/2}x_1\ldots x_k$, so
$p_1(X)^2=\det (X)$.  The polynomials $p_{k,1},\ldots ,p_{k,k}$
freely generate $\rI_{W (\fg_k,\fh_k)}(\fh_{k,\R})$.

We embed $\fh_{n,\R}$ in $\fh_{k,\R}$ in the same manner as before.
This corresponds to
\[\begin{pmatrix} A & B\cr C & -A^t\end{pmatrix}\mapsto
\left ( \begin{smallmatrix} 0_{k-n,k-n} & & 0_{k-n,k-n} & \cr
0 &A & 0  & B\cr
0_{k-n,k-n} & 0 & 0 \cr
0 & C & 0 & -A^t\end{smallmatrix}\right )
\, .
\]
It is then clear that
\[W_{k,n}=(\gS_{k-n}\rtimes \{1,-1\}^{k-n})\times_*
(\gS_n\rtimes \{1,-1\}^{n})\]
where the ${}_*$ indicates that $\epsilon_1\cdots \epsilon_n=1$.
Therefore, the restrictions of elements of $W_{k,n}$, $k>n$, contain
all sign changes, and
\[\gS_n\rtimes \{\epsilon \in \{1,-1\}^{n-1}\mid \epsilon_1\ldots \epsilon_n=1\} =
W (\fg_n,\fh_n) \subsetneqq W_{k,n}|_{\fh_{n,\R}}=\gS_n\rtimes \{1,-1\}^n\, .\]
The Pfaffian $p_{k,1}(0,X)=0$ and
\begin{eqnarray*}
F_k(t,(0,x))&=& t^{2k}+\sum_{\nu =2}^{k} p_{k,\nu }(0,x) t^{2(\nu-1)}\\
&=& t^{2(k-n)}F_n(t,x) = t^{2(k-n)}(t^{2n}+
\sum_{\nu =2}^{n} p_{n,\nu }(x) t^{2(\nu-1)} + p_{n,1}(x)^2)\\
&=& t^{2k}+\sum_{\nu=k-n+2}^k p_{n,\nu +n-k}(x)t^{2(\nu -1)}+
p_{n,1}(x)^2t^{2(k-n)}\, .
\end{eqnarray*}
Hence
$$
\begin{aligned}
&p_{k,\nu}|_{\fh_{n,\R}} = p_{n,\nu +n-k} \text{ for }
   k-n+2\leqq \nu \leqq k\, , \\
&p_{k, k-n+1}|_{\fh_{n,\R}}=p_{n,1}(x)^2\,, \text{ and } \\
&p_{k,\nu}|_{\fh_{n,\R}}=0\, ,\quad \nu =1,\ldots , k-n\, .
\end{aligned}
$$
In particular the elements in $\rI_{W(\fg_k,\fh_k)}(\fh_{k,\R})|_{\fh_{n,\R}}$ are
polynomials in even powers of $x_j$ and $p_{n,1}$ is not in the image of
the restriction map. Thus
\[\rI_{W (\fg_k,\fh_k)}(\fh_{k,\R})|_{\fh_{n,\R}}\subsetneqq
 \rI_{W (\fg_n,\fh_n)}(\fh_{n,\R})\, .\]

Let $\sigma_k$ be the involution of the Dynkin diagram for $D_k$ given by $\sigma (\alpha_1)=\alpha_2$, $\sigma (\alpha_2)=\alpha_1$ and $\sigma_k(\alpha_j)=\alpha_j$ for $3\le j\le k$. Then $\sigma_k|_{\fh_n}=\sigma_n$, $\sigma_k(\fh_{n,\R})$ and $\sigma_k$ normalizes $W (\fg_k,\fh_k)$. The group $\widetilde{W}_k=\widetilde{W} (\fg_k,\fh_k):=W (\fg_k,\fh_k)\rtimes \{1,\sigma_k\}$ is the group $\gS_k\rtimes \{1,-1\}^k$. Hence
\[\widetilde{W} (\fg_n,\fh_n)=W_{\fh_n} (\fg_k,\fh_k)|_{\fh_{n,\R}}=
\widetilde{W}_{\fh_n}(\fg_k,\fh_k)|_{\fh_n}\, .\]
We also note that $\widetilde{W}(\fg_k,\fh_k)$ is isomorphic to the  
Weyl group of the root system $B_k$ and hence is a finite reflection group.

The algebra $\rI_{\widetilde{W}_k}(\fg_k,\fh_k)$ is the algebra of all \textit{even} elements in $\rI_{W_k}(\fg_k,\fh_k)$. Denote it by $\rI_{W(\fg_k,\fh_k)}^{\text{even}}(\fh_{k,\R})$. The above calculations shows that
\[\rI_{W_n} ^{\text{even}}(\fg_n,\fh_n)=\rI_{W_k}(\fg_k,\fh_k)|_{\fh_n}=\rI_{W_k} ^{\text{even}}(\fg_k,\fh_k)|_{\fh_n}\, .\]

We put these results together in the following theorem.

\begin{theorem}\label{th-AdmExt}  Assume $\fg_n$ and $\fg_k$ are
simple complex Lie algebras of ranks $n$ and $k$, respectively, and
that $\fg_k$ propagates $\fg_n$.
\begin{enumerate}
\item
If $\fg_n\not=\so (2n,\C)$ and $\fg_k\not= \so (2k,\C)$ then
\[
W(\fg_n,\gh_n) = W_{\gh_n}(\fg_k,\fh_k)|_{\fh_n}
= \{w|_{\fh_n} \mid w \in W(\fg_k,\fh_k) \text{ with } w(\fh_n) = \fh_n\}
\]
and the restriction map
\[\rI_{W(\fg_k,\fh_k)}(\fh_{k,\R})\to \rI_{W(\fg_n,\fh_n)}(\fh_{n,\R})\]
is surjective.

\item If $\fg_n=\so (2n,\C)\subset \fg_k=\so(2k,\C)$, then
\[
\text{\phantom{XX}} W_{\gh_n}(\fg_k,\fh_k)|_{\fh_n} =\{w|_{\fh_n}\mid w
\in W(\fg_k,\fh_k) \text{ with } w(\fh_n)=\fh_n\}=\gS_n\rtimes \{1,-1\}^n
\]
contains all sign changes, but the elements of $\Wn$ contain only even
numbers of sign changes, so
$W(\fg_n,\gh_n) \subsetneqq W_{\gh_n}(\fg_k,\fh_k)|_{\fh_n}$.  The
elements of $\rI_{W(\fg_k,\gh_k)}(\fh_{k,\R})|_{\fh_{n,\R}}$ are polynomials in the
$x_j^2$, and the Pfaffian $($square root of the determinant$)$
is not in the image of the restriction map $\rI_{W(\fg_k,\fh_k)}(\fh_{k,\R})\to
\rI_{W(\fg_n,\fh_n)}(\fh_{n,\R})$.

\item With the assumptions from {\rm (2)} let $\sigma_k$ be as above and let
$\widetilde{W}_k(\fg_k,\fh_k)=
W (\fg_k,\fh_k)\ltimes \{1,\sigma_k\}$. Then $\widetilde{W} (\fg_k,\fh_k)$ is a finite reflection group, and
\[\widetilde{W} (\fg_n,\fh_n)=W_{\fh_n} (\fg_k,\fh_k)|_{\fh_{n,\R}}=
\widetilde{W}_{\fh_n}(\fg_k,\fh_k)|_{\fh_n}\, .\]

\item With the assumptions from {\rm (2)} we have
\[\rI_{W(\fg_k,\fh_k)}^{\text{even}}(\fh_{k,\R}) =
\rI_{\widetilde{W}(\fg_k,\fh_k)} (\fh_{k,\R})\]
and
\[ \rI_{W(\fg_k,\fh_k)}(\fh_{k,\R})|_{\fh_{n,\R}}=\rI_{\widetilde W(\fg_k,\fh_k)}(\fh_{k,\R})|_{\fh_{n,\R}}=\rI_{\widetilde W(\fg_n,\fh_n)}(\fh_{n,\R})\, .\]
\end{enumerate}
\end{theorem}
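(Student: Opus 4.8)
The plan is to reduce Theorem~\ref{th-AdmExt} to the four explicit case computations carried out above, one for each classical type. First I would observe that propagation forces $\fg_n$ and $\fg_k$ to be of the same type: by \eqref{rootorder} the Dynkin diagram of $\fg_k$ is obtained from that of $\fg_n$ by adjoining nodes at the left end, which preserves the type among $A,B,C,D$. Thus one of the four cases above applies, with Cartan subalgebras chosen so that $\fh_{n,\R}\subseteqq\fh_{k,\R}$ is the coordinate subspace obtained by padding with $k-n$ zeros on the left.

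For part~(1) I would invoke the descriptions of $W_{k,n}$ in the $A_k$, $B_k$, $C_k$ cases: in each, $W_{k,n}$ splits as a direct product of a ``new'' factor ($\gS_{k-n}$, respectively $\gS_{k-n}\rtimes\{1,-1\}^{k-n}$) and a copy of $W(\fg_n,\fh_n)$, the new factor being exactly the kernel of restriction to $\fh_{n,\R}$; hence $W_{\fh_n}(\fg_k,\fh_k)|_{\fh_n}=W(\fg_n,\fh_n)$. Surjectivity of the restriction of invariants comes from the identity $F_k(t,(0_{k-n},x))=t^{2(k-n)}F_n(t,x)$ (with $t^{k-n}$ in type $A$), which gives $p_{k,\nu}|_{\fh_{n,\R}}=p_{n,\nu+n-k}$ for the top $n$ indices and $p_{k,\nu}|_{\fh_{n,\R}}=0$ otherwise, so all generators of $\rI_{W(\fg_n,\fh_n)}(\fh_{n,\R})$ lie in the image. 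For part~(2), the case $D_k$, the same computations give the sharper conclusions: the ``new'' and ``old'' sign-change factors are coupled only through the single relation $\epsilon_1\cdots\epsilon_n=1$ on the full $k$-tuple, so an odd sign change in the last $n$ coordinates can be compensated in the first $k-n$; therefore $W_{\fh_n}(\fg_k,\fh_k)|_{\fh_n}=\gS_n\rtimes\{1,-1\}^n$, which strictly contains $W(\fg_n,\fh_n)$. Since the Pfaffian $p_{k,1}$ vanishes on $\fh_{n,\R}$ (there $k-n$ of the coordinates are zero) and $p_{k,k-n+1}|_{\fh_{n,\R}}=p_{n,1}^2$, the image of the restriction map is $\C[p_{n,1}^2,p_{n,2},\dots,p_{n,n}]$, i.e.\ the polynomials in the $x_j^2$, which does not contain $p_{n,1}$.

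For parts~(3) and~(4) I would analyze $\widetilde{W}_k=W(\fg_k,\fh_k)\rtimes\{1,\sigma_k\}$. On $\fh_{k,\R}=\R^k$ the diagram involution $\sigma_k$ (swapping $\alpha_1\leftrightarrow\alpha_2$) acts as the single sign change $x_1\mapsto-x_1$; adjoining it to $W(\fg_k,\fh_k)$, which contains the even sign changes, yields all of $\gS_k\rtimes\{1,-1\}^k$. Realized by signed permutation matrices this is the Weyl group of type $B_k$, generated by the reflections $x_i\leftrightarrow x_j$ and $x_i\mapsto-x_i$, so it is a finite reflection group. One checks directly that $\sigma_k|_{\fh_{n,\R}}=\sigma_n$, that $\sigma_k$ preserves $\fh_{n,\R}$, and that it normalizes $W(\fg_k,\fh_k)$; combining this with part~(2) gives $\widetilde{W}(\fg_n,\fh_n)=\gS_n\rtimes\{1,-1\}^n=W_{\fh_n}(\fg_k,\fh_k)|_{\fh_n}$, and since $\widetilde{W}_{\fh_n}(\fg_k,\fh_k)=(\gS_{k-n}\rtimes\{1,-1\}^{k-n})\times(\gS_n\rtimes\{1,-1\}^n)$ restricts onto the same group, part~(3) follows. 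For part~(4), $\rI_{\widetilde{W}(\fg_k,\fh_k)}(\fh_{k,\R})$ is the algebra of symmetric polynomials in $x_1^2,\dots,x_k^2$, which is precisely the subalgebra of $\rI_{W(\fg_k,\fh_k)}(\fh_{k,\R})$ consisting of polynomials even in each variable, i.e.\ $\rI^{\mathrm{even}}_{W(\fg_k,\fh_k)}(\fh_{k,\R})$ (adjoining $\sigma_k$ only kills the odd generator $p_{k,1}$); restricting its generators $p_{k,1}^2,p_{k,2},\dots,p_{k,k}$ to $\fh_{n,\R}$ reproduces the algebra $\C[p_{n,1}^2,p_{n,2},\dots,p_{n,n}]=\rI_{\widetilde{W}(\fg_n,\fh_n)}(\fh_{n,\R})$ found in part~(2), giving both displayed equalities.

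The one place that needs care, rather than mere transcription, is the type-$D$ parity bookkeeping: verifying that restriction of $W_{\fh_n}(\fg_k,\fh_k)$ picks up exactly the odd sign changes (neither fewer nor, impossibly, more), that this is precisely the extension of $W(\fg_n,\fh_n)$ by $\sigma_n$, and that in part~(4) the three algebras — the even $W_k$-invariants, their restrictions to $\fh_{n,\R}$, and the $\widetilde{W}_n$-invariants — are genuinely equal and not merely nested. All the other assertions are immediate consequences of the explicit formulas established above.
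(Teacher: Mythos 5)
Your proposal is correct and follows essentially the same route as the paper: the theorem there is explicitly just the assembly of the preceding case-by-case computations for $A_k$, $B_k$, $C_k$, $D_k$ (the descriptions of $W_{k,n}$, the identity $F_k(t,(0_{k-n},x))=t^{2(k-n)}F_n(t,x)$ for the invariants, the Pfaffian obstruction in type $D$, and the extension by the diagram involution $\sigma_k$ acting as the sign change $x_1\mapsto -x_1$). The parity bookkeeping you flag as the delicate point is handled in the paper exactly as you describe, via the relation $\epsilon_1\cdots\epsilon_n=1$ on the full $k$-tuple being compensable in the first $k-n$ coordinates.
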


\begin{remark}\label{Re-RestrictionDoesNotWork}
{\rm If $\fg_k=\lsl(k+1,\C)$ and $\fg_n$ is constructed
from $\fg_k$ by removing any $n-k$ simple roots from the Dynkin diagram
of $\fg_k$, then Theorem \ref{th-AdmExt}(1) remains
valid because all the Weyl groups are permutation groups.
On the other hand, if $\fg_k$ is of type $B_k,C_k$, or $D_k$ ($k\geqq 3$)
and if $\fg_n$ is constructed from $\fg_k$ by removing at least one simple
root $\alpha_i$ with $k-i\geqq 2$,
then $\fg_n$ contains at least one
simple factor $\fl$ of type $A_\ell$, $\ell \geqq 2$. Let
$\fa$  be a Cartan subalgebra of $\fl$. Then the
restriction of the Weyl group of $\fg_k$ to
$\fa_\R$ will contain $-\mathrm{id}$.  But $-\mathrm{id}$ is not in the
Weyl group $W(\gs\gl(\ell+1,\C))$, and the restriction of the
invariants will only contain even polynomials.
Hence the conclusion Theorem \ref{th-AdmExt}(1) fails in this case.}
\hfill $\diamondsuit$
\end{remark}

We also note the following consequence of the definition of propagation.
It is implicit in the diagrams following that definition.
\begin{lemma}\label{le-RestrictionOfSimpleRoots} Assume that $\fg_k$
propagates $\fg_n$. Let $\fh_k$ be a Cartan subalgebra
of $\fg_k$ such that $\fh_n=\fh_k\cap \fg_n$ is a Cartan subalgebra of
$\fg_n$. Choose positive systems
$\Delta^+(\fg_k,\fh_k)\subset \Delta(\fg_k,\fh_k)$ and
$\Delta^+(\fg_n,\fh_n)\subset \Delta(\fg_n,\fh_n)$
such that $\Delta^+(\fg_n,\fh_n)\subseteqq
\Delta^+(\fg_k,\fh_k)|_{\fh_n}$.
Then we can number the simple roots such that
$\alpha_{n,j}=\alpha_{k,j}|_{\fh_n}$ for $j=1,\ldots ,\dim \fh_n$.
\end{lemma}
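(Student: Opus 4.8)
The plan is to reduce to a single simple propagation and then to show that the chosen compatible positive systems force each simple root of $\fg_n$ to be the restriction of exactly one simple root of $\fg_k$; listing those $n=\dim\fh_n$ simple roots of $\fg_k$ first produces the required numbering.

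First I would reduce to the simple case. By the definition of propagation for semisimple algebras one can number the simple ideals so that $\fg_{k,j}$ propagates $\fg_{n,j}$ for $j\le s$, while the ideals $\fg_{k,j}$ with $s<j\le r$ have no counterpart. Since $\fh_n=\bigoplus_{j\le s}\fh_{n,j}$ sits blockwise inside $\fh_k=\bigoplus_{j\le r}\fh_{k,j}$, every positive system of $\fg_k$ and of $\fg_n$ is a disjoint union over the ideals, and the restriction map $\fh_{k,\R}^*\to\fh_{n,\R}^*$ acts one block at a time, annihilating all roots of the ideals with $j>s$. Thus it suffices to treat a single propagation $\fg_n\subseteq\fg_k$ of simple algebras, with the simple roots coming from the unmatched ideals numbered last (they restrict to $0$).

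Now let $r\colon\fh_{k,\R}^*\to\fh_{n,\R}^*$ be the restriction of functionals, i.e.\ orthogonal projection; since $\fg_n\subseteq\fg_k$ we have $\Delta_n\subseteq r(\Delta_k)\setminus\{0\}$. Fix bases $\Pi_k,\Pi_n$ of the chosen positive systems. For $\gamma\in\Pi_n$ the compatibility hypothesis supplies some $\alpha\in\Delta_k^+$ with $r(\alpha)=\gamma$; choose such an $\alpha=\beta(\gamma)$ of minimal height with respect to $\Pi_k$. The heart of the matter is to prove that $\beta(\gamma)$ is simple. Suppose not and write $\beta(\gamma)=\delta+\beta'$ with $\delta\in\Pi_k$ and $\beta'\in\Delta_k^+$, so that $\gamma=r(\delta)+r(\beta')$. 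Provided the chosen positive systems are adapted to the propagation in the sense that $r(\Delta_k^+)$ lies in the closed cone $\overline C_n$ of nonnegative combinations of $\Pi_n$, and that the restriction of a single root of $\fg_k$ is never a proper positive multiple of a simple root of $\fg_n$, the indecomposability of $\gamma$ (a simple root spans an extreme ray of $\overline C_n$) forces one of $r(\delta),r(\beta')$ to be $0$ and the other to be $\gamma$; either alternative exhibits a lift of $\gamma$ of strictly smaller height, contradicting minimality. Hence $\beta(\gamma)\in\Pi_k$. The assignment $\gamma\mapsto\beta(\gamma)$ is injective because $r(\beta(\gamma))=\gamma$, so we obtain $n$ distinct simple roots of $\fg_k$; numbering these first (in the order of $\Pi_n$) and the remaining $k-n$ simple roots of $\fg_k$ afterwards gives $\alpha_{n,j}=\alpha_{k,j}|_{\fh_n}$ for $1\le j\le n=\dim\fh_n$.

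The step I expect to be the genuine obstacle is precisely the bracketed structural input above — equivalently, that the minimal positive lift of each simple root of $\fg_n$ is itself simple. This is where the propagation hypothesis, that $\Psi_n$ occupies the right-hand end of the Dynkin diagram of $\Psi_k$ with nodes added only on the left, must be used, and it is not a formal consequence of the set-theoretic containment $\Delta_n^+\subseteq r(\Delta_k^+)$ on its own: a positive root of $\fg_k$ can restrict to a functional outside $\overline C_n$ once a ``new'' coordinate is interleaved among the surviving ones. The most delicate types are $A$, where a junction simple root of $\fg_k$ restricts to a coordinate weight $\pm f_i$ rather than to a root of $\fg_n$ (so positivity of $r$ on $\Delta_k^+$ is invisible at the level of root-to-root behaviour), and $C$, where such a restriction can equal $\tfrac12$ of a long simple root; both phenomena must be controlled from the position of the subdiagram. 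I would discharge this point type by type against the explicit realizations recorded in the preceding case analysis: for $B_k$ and $C_k$ restriction already carries roots to roots, making the positivity transparent, while $D_k$ additionally requires the parity bookkeeping encoded in $\widetilde W(\fg_k,\fh_k)$.
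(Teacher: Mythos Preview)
The paper offers no separate argument for this lemma; the sentence immediately preceding it says the claim is ``implicit in the diagrams following that definition.''  In other words, the intended proof is direct inspection of the four explicit realizations already recorded in Section~\ref{sec2}: for each type $A_k$, $B_k$, $C_k$, $D_k$ the simple roots $\alpha_1,\ldots,\alpha_k$ and the embedding $\fh_{n,\R}\hookrightarrow\fh_{k,\R}$ are written in coordinates, and one reads off that $\alpha_{k,j}|_{\fh_n}=\alpha_{n,j}$ for $1\le j\le n$ (for instance, in type $B$ one has $\alpha_1=f_1$ and $\alpha_j=f_j-f_{j-1}$ for $j\ge 2$, and these restrict to themselves since $f_1,\ldots,f_n$ span $\fh_{n,\R}$).

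Your approach is considerably more elaborate: you attempt a type-free minimal-height lifting argument, then concede that it does not close without the very type-by-type verification that constitutes the paper's entire proof.  Since you end up needing the explicit coordinate embeddings anyway, the conceptual detour through extremality of simple roots in the cone $\overline C_n$ buys nothing here; the self-identified obstacle (that $r(\Delta_k^+)\subseteq\overline C_n$ is not a formal consequence of $\Delta_n^+\subseteq r(\Delta_k^+)$) is real and is precisely what the explicit diagrams settle at a glance.  One minor slip: the parity bookkeeping via $\widetilde W(\fg_k,\fh_k)$ in type $D$ concerns surjectivity of invariant polynomials and of the Weyl group restriction map, not the identification of simple roots; the present lemma holds in type $D$ by the same direct inspection as in types $B$ and $C$, with no appeal to $\widetilde W$.
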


\section{Symmetric Spaces}\label{sec3}
\noindent
Now we discuss restriction of invariant polynomials related to
Riemannian symmetric spaces.
Let $M=G/K$ be a Riemannian symmetric space of compact or noncompact
type. Thus $G$ is a connected semisimple Lie group with an involution
$\theta$ such that
\[(G^\theta)_o\subseteqq K\subseteqq G^\theta\]
where $G^\theta =\{x\in G\mid \theta (x)=x\}$ and the subscript ${}_o$ denotes
the connected component containing the identity element. If
$G$ is simply connected then $G^\theta$ is connected and
$K=G^\theta$. If $G$ is noncompact and with finite
center, then $K\subset G$ is a \textit{maximal
compact} subgroup of $G$, $K$ is connected, and $G/K$ is simply
connected.

Denote the Lie algebra of $G$ by $\fg$. Then
$\theta$ defines an involution $\theta : \fg\to \fg$, and
$\fg=\fk\oplus \fs$
where $\fk=\{X\in\fg\mid \theta(X)=X\}$ is the Lie algebra of $K$ and
$\fs=\{X\in \fg\mid \theta (X)=-X\}$.

Cartan Duality is the bijection between simply connected
symmetric spaces of noncompact type and those of compact type defined by
$\fg = \fk\oplus \fs \leftrightarrow \fk\oplus i\fs = \fg^d$.
We denote it by $M\leftrightarrow M^d$.

Fix a maximal abelian subset $\fa\subset \fs$.  If  $\alpha \in\fa^*_\C$ we write
$\fg_{\C,\alpha} =\{X\in\fg_\C \mid [H,X]=\alpha (H)X \text{ for all }
H\in \fa_\C\}$, and if
$\fg_{\C,\alpha}\not=\{0\}$ then $\alpha$ is a (restricted) root. Denote
by $\Sigma (\fg,\fa)$ the set of roots.  If $M$ is of noncompact type, then
all the roots are in the real dual space $\fa^*$ and
$\fg_{\C,\alpha}=\fg_\alpha +i\fg_\alpha$, where
$\fg_\alpha =\fg_{\C,\alpha}\cap \fg$. If $M$ is of compact type, then the
roots take pure imaginary values on $\fa$,
$\Sigma (\fg,\fa)\subset i\fa^*$, and $\fg_{\C,\alpha}\cap \fg=\{0\}$. The
set of roots is preserved under duality
where we view those roots as $\C$--linear functionals on $\ga_\C$.

Let $\Sigma_{1/2}(\fg,\fa )=\{\alpha \in\Sigma(\fg,\fa ) \mid \tfrac{1}{2 }\alpha\not\in \Sigma (\fg, \fa)\}$.
Then $\Sigma_{1/2}(\fg, \fa )$ is a root system in the usual sense and the Weyl
group corresponding to $\Sigma (\fg, \fa)$ is the same as the Weyl group generated
by the reflections $s_\alpha$, $\alpha \in \Sigma_{1/2}(\fg, \fa)$.
Furthermore, $M$ is
irreducible if and only if $\Sigma_{1/2}(\fg, \fa )$ is irreducible, i.e., can not be
decomposed into two mutually orthogonal root systems.

Let $\Sigma^+(\fg, \fa)\subset \Sigma (\fg, \fa)$ be a positive system and
$\Sigma^+_{1/2}(\fg,\fa )=\Sigma^+ (\fg,\fa )\cap \Sigma_{1/2} (\fg,\fa)$. Then
$\Sigma^+_{1/2}(\fg,\fa )$ is a positive root system in
$\Sigma_{1/2} (\fg,\fa)$. Denote
by $\Psi_{1/2} (\fg,\fa)$ the set of simple roots in $\Sigma_{1/2}^+(\fg,\fa)$. Then
$\Psi_{1/2} (\fg,\fa )$ is a basis for $\Sigma (\fg,\fa )$.

The list of irreducible symmetric spaces is given by the following table.
The indices $j$ and $k$ are related by $k=2j+1$.  In the fifth
column we list the realization of $K$ as a subgroup of the compact
real form.  The second column indicates the type of the root system
$\Sigma_{1/2}(\fg,\fa)$.
(More detailed information is given by the Satake--Tits diagram for $M$;
see \cite{Ar1962} or \cite[pp. 530--534]{He1978}.
In that classification the case $\SU (p,1)$, $p\geqq 1$, is denoted by $AIV$,
but here it appears in $AIII$.
The case $\SO (p,q)$, $p+q$ odd, $p\ge q>1$, is denoted by $BI$ as in
this case the Lie algebra $\fg_\C=\so (p+q,\C)$ is of type $B$.
The case $\SO (p,q)$, with $p+q$ even, $p\ge q>1$ is denoted by $DI$ as
in this case $\fg_\C$ is of type $D$. Finally, the case $\SO (p,1)$, $p $
even, is denoted by $BII$ and $\SO (p,1)$, $p$ odd, is denoted by $DII$.)

{\footnotesize
\begin{equation}\label{symmetric-case-class}
\begin{tabular}{|c|c|l|l|l|c|c|} \hline
\multicolumn{7}{| c |}
{}\\
\multicolumn{7}{| c |}
{\normalsize{Irreducible Riemannian Symmetric $M = G/K$, $G$ classical,
$K$ connected}}\\
\multicolumn{7}{| c |}
{}\\
\hline \hline
\multicolumn{1}{|c}{} & & \multicolumn{1}{c}{$G$ noncompact}&
    \multicolumn{1}{|c}{$G$ compact} &
        \multicolumn{1}{|c}{$K$} &
        \multicolumn{1}{|c}{Rank$M$} &
        \multicolumn{1}{|c|}{Dim$M$} \\ \hline \hline
$1$ & $A_j$ &$\mathrm{SL}(j,\C)$ &$\SU (j)\times \SU(j)$ & $\diag\, \SU(j)$ & $j-1$ & $j^2-1$ \\ \hline
$2$ & $B_j$&$\SO (2j+1,\C)$&  $\SO (2j+1)\times \SO (2j+1)$ & $\diag\, \SO (2j+1)$ &
    $j$ & $2j^2+j$ \\ \hline
$3$ & $D_j$&$\SO (2j,\C)$ & $\SO (2j)\times \SO (2j)$ & $\diag\, \SO(2j)$ &
    $j$ & $2j^2-j$ \\ \hline
$4$ & $C_j$&$\Sp (j,\C)$&$\Sp (j)\times \Sp (j)$ & $\diag\,\Sp (j)$ & $j$ & $2j^2+j$ \\ \hline
$5$ & $AIII$& $\SU (p,q)$&$\SU(p+q)$ & $\mathrm{S}(\U (p)\times \U ( q))$ &
    $\min(p,q)$ & $2pq$ \\ \hline
$6$ &$AI$ &$\mathrm{SL}(j,\R)$& $\SU (j)$ & $\SO (j)$ & $j-1$ & $\tfrac{(j-1)(j+2)}{2}$ \\ \hline
$7$ &$AII$ &$\SU^*(2j) = SL(j,\H)$& $\SU (2j)$ & $\Sp (j)$ & $j-1$ & $2j^2-j-1$  \\ \hline
$8$ &$BDI$&$\SO_o (p,q)$ &$\SO (p+q)$ & $\SO (p) \times \SO (q)$ &
    $\min(p,q)$ & $pq$  \\ \hline
$9$ &$DIII$&$\SO^*(2j)$ &$\SO (2j)$ & $\U (j)$ & $[\tfrac{j}{2}]$ & $j(j-1)$ \\ \hline
$10$ &$CII$&$\Sp(p,q)$ &$\Sp (p+q)$ & $\Sp (p) \times \Sp (q)$ &
    $\min(p,q)$ & $4pq$  \\ \hline
$11$ & $CI$& $\Sp (j,\R)$  &$\Sp (j)$ & $\U (j)$ & $j$ & $j(j+1)$  \\ \hline
\end{tabular}
\end{equation}
}

Only in the following cases do we have
$\Sigma_{1/2}(\fg,\fa )\not= \Sigma (\fg,\fa)$:
\begin{itemize}
\item $AIII$ for $1 \leqq p < q$,
\item $CII$ for $1 \leqq p < q$, and
\item $DIII$ for $j$ odd.
\end{itemize}
In those three cases there is exactly one simple root with
$2\alpha \in\Sigma (\fg,\fa )$
and this simple root is  at the
right end of the Dynkin diagram for $\Psi_{1/2} (\fg,\fa )$. Also, either
$\Psi_{1/2} (\fg,\fa )=\{\alpha\}$ contains one simple root or
$\Psi_{1/2}(\fg,\fa )$ is of type $B_r$
where $r=\dim \fa$ is the rank of $M$.

Finally, the only two cases where $\Psi_{1/2} (\fg,\fa )$ is of type $D$ are
the case $\SO (2j,\C)/\SO(2j)$ or  the split case
$\SO_o(p,p)/\SO (p)\times \SO (p)$.
In particular, if $\Psi_{1/2}(\fg,\fa)$ is of type $D$ then $\fa$ is a Cartan subalgebra of $\fg$.

Let $G/K$ be an irreducible symmetric space of compact or non-compact type. As
before let $\fa\subset \fs$ be maximal abelian. Let $\fh$ be a Cartan subalgebra of
$\fg$ containing $\fa$. Then $\fh=(\fh\cap \fk) \oplus \fa$. Let
$\Delta (\fg,\fh)$, $\Sigma (\fg,\fa)$, and $\Sigma_{1/2}(\fg,\fa)$ denote the corresponding
root systems and $W (\fg,\fh)$ respectively $W (\fg,\fa)$ the Weyl group
corresponding to $\Delta (\fg,\fh)$ respectively $\Sigma (\fg,\fa)$. We define
an extension of those Weyl groups $\widetilde{W}(\fg,\fh)$ and $\widetilde{W}(\fg,\fa)$ as before.

\medskip
Note that $\widetilde{W}(\fg,\fa) = W (\fg,\fa)$  with only two exceptions:
(i) the case where $M$ locally isomorphic to
$\SO (2j,\C)/\SO (2j)$ (with $\fh = \fa_\C$) or its compact dual
$(SO(2j)\times SO(2j))/\diag\, SO(2j)$ (with $\fh \cong \fa \oplus \fa$),
and (ii) the case where $\SO_o(j,j)/\SO (j)\times \SO(j)$
or its compact dual $\SO (2j)/\SO (j)\times \SO (j)$ with $\fh = \fa$.

\begin{theorem}\label{th-IhIa} Let $G/K$ be a symmetric space of
compact or non-compact type (no Euclidean factors).
In the above notation,
$\widetilde{W}(\fg,\fa)= \widetilde{W}_\fa (\fg,\fh)|_{\fa}$ and
the restriction map
$I_{\widetilde{W}(\fg,\fh)}(\fh_\R )\to I_{\widetilde{W}(\fg,\fa)}(\fa )$ is
surjective.
\end{theorem}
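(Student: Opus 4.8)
The plan is to reduce to the irreducible case and then to work through the classification \eqref{symmetric-case-class} case by case, in each case falling back on the explicit realizations and invariant-polynomial computations of Section \ref{sec2}. First I would pass to irreducible $G/K$: decomposing $\fg$ into $\theta$-stable ideals with irreducible quotients, the subspaces $\fa$, $\fh$, $\fh_\R$, the root systems $\Delta(\fg,\fh)$ and $\Sigma(\fg,\fa)$, the Weyl groups, their extensions $\widetilde{W}$, and the invariant algebras all split as (tensor) products over the factors, and the restriction map is the corresponding tensor product; so both assertions follow from the irreducible case. Next I would invoke Cartan duality $M\leftrightarrow M^d$: since $\fg_\C$, $\fh_\C$, $\fa_\C$, the root systems (viewed as sets of $\C$-linear functionals), the Euclidean real forms $\fh_\R$ and $\fa$, and hence $\widetilde{W}(\fg,\fh)$, $\widetilde{W}(\fg,\fa)$, $\widetilde{W}_\fa(\fg,\fh)$ and the algebras $\rI_{\widetilde{W}(\fg,\fh)}(\fh_\R)$, $\rI_{\widetilde{W}(\fg,\fa)}(\fa)$ are all unaltered, the statement for $M$ is equivalent to the one for $M^d$. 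Hence I may assume $G/K$ irreducible of non-compact type; this folds the compact ``group manifold'' cases (rows $1$--$4$) into their non-compact duals.

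When $G$ is complex simple (the non-compact form in rows $1$--$4$), the Cartan decomposition gives $\fh_\R\cong \fa\oplus\fa$ with $W(\fg,\fh)\cong W(\fg,\fa)\times W(\fg,\fa)$ acting factorwise and $\fa$ sitting as the anti-diagonal, and likewise $\widetilde{W}(\fg,\fh)\cong\widetilde{W}(\fg,\fa)\times\widetilde{W}(\fg,\fa)$. Then $\widetilde{W}_\fa(\fg,\fh)|_\fa$ is the diagonal subgroup $\widetilde{W}(\fg,\fa)$, and $\rI_{\widetilde{W}(\fg,\fh)}(\fh_\R)\cong\rI_{\widetilde{W}(\fg,\fa)}(\fa)\otimes\rI_{\widetilde{W}(\fg,\fa)}(\fa)$ surjects onto $\rI_{\widetilde{W}(\fg,\fa)}(\fa)$ by $p\otimes q\mapsto p|_\fa$; this settles these rows, and in particular $\SO(2j,\C)/\SO(2j)$ with no extra work. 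In the split cases $AI$, $CI$ and $\SO_o(j,j)/\SO(j)\times\SO(j)$ we have $\fa=\fh$, so the restriction map is the identity and there is nothing to prove; note that $\SO_o(j,j)$ is the only remaining case in which $\widetilde{W}(\fg,\fa)\neq W(\fg,\fa)$.

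In the remaining cases ($AII$, $AIII$, $BDI$ with $p<q$, $CII$, $DIII$) the complexification $\fg_\C$ is classical simple of rank strictly larger than $\dim\fa$, and in each of them I would exhibit the embedding $\fa\hookrightarrow\fh_\R$ explicitly and compute the restrictions to $\fa$ of the generators $F_n(t,\cdot)$, $p_{n,\nu}$ of $\rI_{W(\fg,\fh)}(\fh_\R)$ exactly as in Section \ref{sec2}. This yields $\rI_{W(\fg,\fh)}(\fh_\R)|_\fa=\rI_{W(\fg,\fa)}(\fa)$ --- which is Helgason's theorem \cite{He1964} for classical $\fg$, and which in several of these cases (e.g. $CII$ with $p=q$, or $BDI$ with $p+q$ odd) is a direct instance of Theorem \ref{th-AdmExt}(1), since there $\fa_\C$ is the Cartan subalgebra of a classical simple factor that $\fg_\C$ propagates --- together with $W_\fa(\fg,\fh)|_\fa=W(\fg,\fa)$. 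Since in all these cases $\Sigma_{1/2}(\fg,\fa)$ is of type $A$, $B$ or $C$, we have $\widetilde{W}(\fg,\fa)=W(\fg,\fa)$, and it remains only to upgrade both statements from $W(\fg,\fh)$ to $\widetilde{W}(\fg,\fh)$.

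The one point that needs care --- and the expected main obstacle --- is this last upgrade when $\fg_\C$ is of type $D_m$, i.e. in $BDI$ with $p<q$, $p+q$ even, and in $DIII$, where $\widetilde{W}(\fg,\fh)\supsetneq W(\fg,\fh)$ but $\Sigma_{1/2}(\fg,\fa)$ is not of type $D$. Here $\rI_{\widetilde{W}(\fg,\fh)}(\fh_\R)=\rI_{W(\fg,\fh)}^{\text{even}}(\fh_\R)$, and one must check that its restriction is still all of $\rI_{W(\fg,\fa)}(\fa)$ and that $\widetilde{W}_\fa(\fg,\fh)|_\fa$ is still just $W(\fg,\fa)$. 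This is where the position of $\fa$ inside $\fh_\R$ enters: in $BDI$ with $p<q$ the Pfaffian $p_{m,1}$ restricts to $0$ on $\fa$ (because $\dim\fa<m$), so the even and the full restricted algebras coincide and the extra diagram automorphism restricts into $W(\fg,\fa)$; in $DIII$ the squared Pfaffian restricts to a monomial in the squares of the $\fa$-coordinates which, by the generating-function identity behind the $D_k$ computation of Section \ref{sec2}, is already a polynomial in the restrictions of $p_{m,2},\dots,p_{m,m}$, while $\sigma_m$ either fails to normalize $\fa$ or restricts to an element already present in $W(\fg,\fa)$. Assembling the cases then gives $\widetilde{W}(\fg,\fa)=\widetilde{W}_\fa(\fg,\fh)|_\fa$ and the surjectivity of $\rI_{\widetilde{W}(\fg,\fh)}(\fh_\R)\to\rI_{\widetilde{W}(\fg,\fa)}(\fa)$.
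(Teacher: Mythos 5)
Your proposal is correct and follows essentially the same route as the paper's own proof: reduce to the irreducible case, dispose of the situations where $\Sigma(\fg,\fa)$ is of type $D$ by the observations that there $\fa$ is $\fh$, $\fh_\R$, or the diagonal in $\fh\cong\fa\oplus\fa$, and settle the remaining cases by Helgason's explicit classical computations (Theorem 5 and pp.~594, 596 of \cite{He1964}), which are exactly the Section~\ref{sec2}-style restriction formulas you invoke. The only substantive difference is that you spell out the even-polynomial/Pfaffian subtlety for $BDI$ ($p<q$, $p+q$ even) and $DIII$ --- where $\Delta(\fg,\fh)$ is of type $D$ but $\Sigma_{1/2}(\fg,\fa)$ is not --- a point the paper handles by deferring to Helgason's page-level calculations rather than arguing it directly.
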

\begin{proof} We can assume that $G/K$ is irreducible. If neither
$\Delta (\fg,\fh)$ nor $\Sigma (\fg,\fa)$ is of type $D$ this is
Theorem 5 from \cite{He1964}. According to the above discussion, the only
cases where $\Sigma (\fg,\fa)$ is of type $D$ are where $\Delta (\fg,\fh)$
is also of type $D$ and $\fa = \fh_\R$, or $\fa$ is the diagonal in
$\fh \cong \fa \oplus \fa$, or $\fa = \fh$.  The statement is clear when
$\fa$ is $\fh$ or $\fh_\R$.  If $\fa$ is the diagonal in $\fh \cong
\fa \oplus \fa$ then $\widetilde{W}_\fa (\fg,\fh)$ is the diagonal in
$\widetilde{W}(\fg,\gh) \cong
\widetilde{W}(\fg,\fa) \times \widetilde{W}(\fg,\fa)$, hence again is
$\widetilde{W}(\fg,\fa)$.

Now suppose that neither $\Delta (\fg,\fh)$ nor $\Sigma_{1/2} (\fg,\fa)$
is of type $D$.  Then $\widetilde{W}(\fg,\fa)=W(\fg,\fa)$ consists of all
permutations with sign changes (with respect to the correct basis). The
claim now follows from  the explicit calculations in
\cite[pp. 594, 596]{He1964}.
\end{proof}

Let $M_k=G_k/K_k$ and $M_n=G_n/K_n$ be irreducible
symmetric spaces of compact or noncompact type.  We say that
$M_k$ \textit{propagates} $M_n$, if $G_n\subseteqq G_k$, $K_n=K_k\cap G_n$,
and either $\fa_k=\fa_n$ or choosing $\fa_n\subseteqq \fa_k$ we
only add simple roots to the left end
of the Dynkin diagram for $\Psi_{1/2}(\fg_n,\fa_n)$ to obtain the Dynkin diagram
for $\Psi_{1/2}(\fg_k,\fa_k)$.
So, in particular $\Psi_{1/2}(\fg_n,\fa_n)$ and
$\Psi_{1/2}(\fg_k, \fa_k)$ are of the same type.
In general, if
$M_k$ and $M_n$ are Riemannian symmetric spaces
of compact or noncompact type, with universal covering
$\widetilde{M_k}$ respectively $\widetilde{M_n}$, then $M_k$
\textit{propagates}
$M_n$ if we can enumerate the irreducible factors of $\widetilde{M}_k= M_k^1\times
\ldots \times M_k^j$ and $\widetilde{M}_n=M_n^1\times \ldots \times M_n^i$, $i \leqq j$
so that $M_k^s$ propagates $M_n^s$ for $s=1,\ldots ,i$.
Thus, each $M_n$ is, up to covering, a product of irreducible factors
listed in Table \ref{symmetric-case-class}.

In general we can construct infinite sequences of propagations by moving
along each row in Table \ref{symmetric-case-class}. But there are also  inclusions like
$\mathrm{SL} (n,\R )/\SO (n)\subset \mathrm{SL} (k,\C)/\SU (k)$ which
satisfy the definition of propagation.

When $\fg_k$ propagates $\fg_n$, and $\theta_k$ and
$\theta_n$ are the corresponding involutions with
$\theta_k|_{\fg_n} = \theta_n$, the corresponding eigenspace decompositions
$\fg_k=\fk_k\oplus \fs_k$ and $\fg_n=\fk_n\oplus \fs_n$
give us
\[
\fk_n=\fk_k\cap \fg_n\, ,\quad \text{and}\quad \fs_n=\fg_n\cap \fs_k\, .\]
We recursively choose maximal commutative subspaces $\fa_k\subset \fs_k$ such
that $\fa_{n} \subseteqq \fa_k$ for $k\geqq n$.
Denote by $W(\fg_n,\fa_n)$ and $W(\fg_k,\fa_k)$ the corresponding Weyl
groups. The extensions $\widetilde{W}(\fg_k,\fa_k)$ and
$\widetilde{W}(\fg_n,\fa_n)$ are defined as just before Theorem \ref{th-IhIa}.
Let  $\rI(\fa_n)=\rI_{W (\fg_n,\fa_n)}(\fa_n)$,
$\rI_{\widetilde W (\fg_n,\fa_n)}(\fa_n)$, and
$\rI_{\widetilde W (\fg_k,\fa_k)}(\fa_k)$ denote the respective sets of Weyl
group invariant or $\widetilde{W}$--invariant polynomials on $\fa_n$ and
$\fa_k$. As before we let
\begin{equation}\label{eq-WknA}
W_{\fa_n}(\fg_k,\fa_k):=\{w\in W (\fg_k,\fa_k) \mid w(\fa_n)=\fa_n\}
\end{equation}
and define $\widetilde{W}_{\fa_n}(\fg_k,\fa_k)$ in the same way.

\begin{theorem}\label{th-AdmExtG/K} Assume that $M_k$ and $M_n$ are
symmetric spaces of compact or noncompact type and that $M_k$ propagates
$M_n$.

\noindent {\rm (1)} If $M_n$ does not contain any irreducible factor
with $\Psi_{1/2} (\fg_n,\fa_n)$ of type $D$, then
\begin{equation}\label{eq-RestrictionOfWeyl1}
W_{\fa_n}(\fg_k,\fa_k)|_{\fa_n} = W(\fg_n,\fa_n)
\end{equation}
and the restriction map $\rI(\fa_k)\to \rI(\fa_n)$ is surjective.

\noindent {\rm (2)} If $\Psi_{1/2}(\fg_n,\fa_n)$ is of type $D$ then
$$W (\fg_n,\fa_n)\subsetneqq
 W_{\fa_n} (\fg_k,\fa_k)|_{\fa_n} \text{ and }
\rI_{W(\fg_k,\fa_k)}(\fa_k)|_{\fa_n}\subsetneqq
  \rI_{W(\fg_n,\fa_n)}(\fa_{n}) .$$
On the other hand
$\widetilde{W}(\fg_n,\fa_n)= \widetilde{W}_{\fa_n} (\fg_k,\fa_k)|_{\fa_n}$
and $\rI_{\widetilde{W}(\fg_n,\fa_n)}(\fa_n) = \rI_{\widetilde{W}(\fg_k,\fa_k)}(\fa_k)|_{\ga_n}$.

\noindent {\rm (3)} In all cases
$\widetilde{W}(\fg_n,\fa_n)= \widetilde{W}_{\fa_n} (\fg_k,\fa_k)|_{\fa_n}$
and $\rI_{\widetilde{W} (\fg_k,\fa_k)}
(\fa_k)|_{\fa_n}= \rI_{\widetilde{W}(\fg_n,\fa_n)}(\fa_n)$.
\end{theorem}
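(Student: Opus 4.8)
The plan is to reduce Theorem~\ref{th-AdmExtG/K} to the case-by-case computations of Section~\ref{sec2}. First I would make two preliminary reductions. By the discussion preceding Theorem~\ref{th-IhIa}, the restricted root system $\Sigma(\fg,\fa)$ — hence also $\Sigma_{1/2}(\fg,\fa)$, the Weyl group $W(\fg,\fa)$ acting on $\fa$, its extension $\widetilde W(\fg,\fa)$, and the invariant algebras — is unchanged under Cartan duality $M\leftrightarrow M^d$, and a propagation $M_k$ of $M_n$ dualizes to a propagation $M_k^d$ of $M_n^d$; so I may assume $M_k$ and $M_n$ are of noncompact type. Next, propagation is defined factor by factor, and $W(\fg,\fa)$, $\widetilde W(\fg,\fa)$, $\rI(\fa)$ and the restriction map all split as products over the irreducible factors of $\widetilde{M}_k$ and $\widetilde{M}_n$ accordingly; the factors $M_k^s$ with $s>i$ that propagate no factor of $\widetilde{M}_n$ merely contribute to the kernel of the restriction, just as the $\gS_{k-n}$-type factors did in Section~\ref{sec2}. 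So it suffices to prove (1) and (2) with $M_n$ irreducible and then assemble (3) as a product over factors.

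With $M_n$ irreducible of noncompact type, $\Sigma_{1/2}(\fg_n,\fa_n)$ is an irreducible root system of one of the classical types $A$, $B$, $C$, $D$ (see Table~\ref{symmetric-case-class}; the non-reduced cases $AIII$, $CII$, $DIII$ have $\Sigma_{1/2}$ of type $B$, and $W(\fg,\fa)$ equals the Weyl group of $\Sigma_{1/2}(\fg,\fa)$). By the definition of propagation of symmetric spaces, $\Sigma_{1/2}(\fg_k,\fa_k)$ is of the same classical type and of rank $\geqq\dim\fa_n$, obtained by adjoining simple roots to the left end of the Dynkin diagram of $\Psi_{1/2}(\fg_n,\fa_n)$, with $\fa_n\subseteqq\fa_k$ and, after a suitable numbering, $\alpha_{n,j}=\alpha_{k,j}|_{\fa_n}$ for $j=1,\dots,\dim\fa_n$ (the analogue of Lemma~\ref{le-RestrictionOfSimpleRoots} for $\Psi_{1/2}$, immediate from the definition). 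Since $W(\fg,\fa)$ acts on $\fa$ in the standard realization of a classical Weyl group and $\fa_n\hookrightarrow\fa_k$ adds coordinates on the left, this is verbatim the configuration analysed in Section~\ref{sec2} with $(\fh_{k,\R},\fh_{n,\R},\Delta_k)$ replaced by $(\fa_k,\fa_n,\Sigma_{1/2}(\fg_k,\fa_k))$, so every conclusion there transfers. When $\Sigma_{1/2}(\fg_n,\fa_n)$ is of type $A$, $B$ or $C$, the computations for $A_k$, $B_k$, $C_k$ give $W_{\fa_n}(\fg_k,\fa_k)|_{\fa_n}=W(\fg_n,\fa_n)$ and surjectivity of $\rI(\fa_k)\to\rI(\fa_n)$, which is (1); since here $\widetilde W(\fg_n,\fa_n)=W(\fg_n,\fa_n)$ and, by type-preservation of propagation, $\widetilde W(\fg_k,\fa_k)=W(\fg_k,\fa_k)$ as well, (1) already yields (3) in this case.

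Next I would treat $\Sigma_{1/2}(\fg_n,\fa_n)$ of type $D$. By the discussion preceding Theorem~\ref{th-IhIa} this forces $M_n$ to be locally $\SO(2j,\C)/\SO(2j)$ or $\SO_o(j,j)/\SO(j)\times\SO(j)$ with $\fa_n$ a Cartan subalgebra of $\fg_n$, and by type-preservation $M_k$ is of the same kind with $\dim\fa_k\geqq\dim\fa_n$; if $\fa_k=\fa_n$ every object below coincides, so I assume $\fa_n\subsetneqq\fa_k$. The $D_k$ computation of Section~\ref{sec2} then gives $W_{\fa_n}(\fg_k,\fa_k)|_{\fa_n}=\gS_n\rtimes\{1,-1\}^n$, which strictly contains $W(\fg_n,\fa_n)$, and shows that $\rI_{W(\fg_k,\fa_k)}(\fa_k)|_{\fa_n}$ consists of polynomials in the $x_j^2$ and omits the Pfaffian — these are the two strict inclusions of (2). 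For the $\widetilde W$-statements, the diagram involution $\sigma$ adjoined in forming $\widetilde W(\fg,\fa)$ is the flip of the two fork-roots of $D$, i.e.\ a single sign change, so $\widetilde W(\fg_n,\fa_n)=\gS_n\rtimes\{1,-1\}^n$ is the Weyl group of $B_n$; then $\widetilde W(\fg_n,\fa_n)=W_{\fa_n}(\fg_k,\fa_k)|_{\fa_n}=\widetilde W_{\fa_n}(\fg_k,\fa_k)|_{\fa_n}$ and $\rI_{\widetilde W(\fg_k,\fa_k)}(\fa_k)|_{\fa_n}=\rI_{\widetilde W(\fg_n,\fa_n)}(\fa_n)$ are exactly the final displayed equalities of Section~\ref{sec2}. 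This proves (2), and together with the non-$D$ case and the product reduction it proves (3).

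The main obstacle is the identification used in the second paragraph: checking that a propagation of symmetric spaces really does realize $\fa_n\hookrightarrow\fa_k$ together with $\Sigma_{1/2}(\fg_n,\fa_n)\hookrightarrow\Sigma_{1/2}(\fg_k,\fa_k)$ in exactly the coordinate model of Section~\ref{sec2}, so that $W(\fg,\fa)$, $W_{\fa_n}(\fg_k,\fa_k)$ and the restriction of invariant polynomials become literally the concrete permutation-and-sign groups and the concrete polynomial map computed there. I expect this to follow from the definition of propagation together with the $\Psi_{1/2}$-analogue of Lemma~\ref{le-RestrictionOfSimpleRoots} and the standard realizations of $\fa$ inside $\fs$ for the classical symmetric pairs; once it is in place, the theorem is a transcription of Section~\ref{sec2}.
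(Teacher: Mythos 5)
Your proposal is correct and follows essentially the same route as the paper, whose entire proof is the two-sentence reduction to irreducible components followed by the observation that the case-by-case argument of Theorem~\ref{th-AdmExt} applies verbatim to the restricted root systems $\Sigma_{1/2}(\fg,\fa)$. You simply make explicit the details the paper leaves implicit (Cartan duality, the product decomposition over irreducible factors, and the identification of $(\fa_k,\fa_n,\Sigma_{1/2})$ with the coordinate model of Section~\ref{sec2}), all of which are sound.
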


\begin{proof} It suffices to prove this for each irreducible component of
$M_n$.  The argument of Theorem \ref{th-AdmExt} is valid here as well, and
our assertion follows.
\end{proof}

\section{Applications}\label{sec1}\setcounter{equation}{0}
\noindent
Our interest in restriction of Weyl groups and polynomial invariants came from the study of
projective limits of function of exponential growth. It turned out that the main step in showing that that the projective limit is non zero one  needed to understand the restriction of invariant polynomials and Weyl groups. We refer to \cite{OW0} for those applications. Some of those results are also mentioned in \cite{DO10} in this volume and will use the notation from that article. We assume that $M=G/K$ is a symmetric space of the noncompact type. We keep the notation from the previous sections. In particular, $\Sigma^+=
\Sigma^+(\fg,\fa)$ is a positive system of restricted roots. Let
\[\fn:=\bigoplus_{\alpha\in\Sigma^+}\fg_\alpha\text{ and } \fp:=\fm\oplus \fa\oplus \fn\, .\]
Then $\fn $ is a nilpotent Lie algebra and $\fp=\fn_\fg (\fn )$ is a minimal parabolic subalgebra. The corresponding minimal parabolic subgroup is $P=MAN$ with $M=Z_K (\fa )$, $A=\exp \fa$, and $N=\exp (\fn)$. We have the Iwasawa decomposition $G=KAN \simeq K\times A\times N$. Write $x=k(x)a(x)n(x)$ for the unique decomposition of $x$. This implies that
$B:=G/P=K/M$ and $G$ acts on $B$ by $x\cdot kM=k(xk)M$.

If $a=\exp (H)\in A$ and $\lambda\in \fa_\C^*$ then $(man)^\lambda = a^\lambda := e^{\lambda (H)}$. Let $\rho :=\frac{1}{2}\sum_{\alpha\in\Sigma^+} \dim \fg_\alpha \alpha\in \fa^*$. We normalize the invariant measures so that $K$ has measure one, $\int_{N} a(\theta( n))^{-2\rho}\, dn=1$, and the measure on $A$ and $\fa^*$ are normalized so that the Fourier inversion holds without constant. Finally
$\int_G f(g)\, dg=\int_K\int_A \int_N f (kan)a^{-2\rho} \, dndadk$, $f\in C_c(G)$. The spherical function with spectral parameter $\lambda\in\fa_\C^*$ is defined by
\begin{equation}\label{def-spherical}
\varphi_\lambda (x)=\int_G a(x^{-1}k)^{-\lambda - \rho}\, dk\, .
\end{equation}
We have $\varphi_\lambda =\varphi_\mu$ if and only if there exists $w\in W$ such that
$w\lambda =\mu$.

The spherical Fourier transform is defined by
\[\widehat{f}(\lambda )=\int_G f(x)\varphi_{-\lambda } (x)\, dx\, ,\quad f\in C_c^\infty (G/K)^K\, .\]
Then $\widehat{f}$ is a holomorphic Weyl group invariant function on $\fa_\C^*$.
Furthermore
\[f(x)=\int_{i\fa^*} \widehat{f}(\lambda )\varphi_\lambda (x)\, \frac{d\lambda }{\# W |c(\lambda )|^2}\]
and the Fourier transform extends to an unitary isomorphism
\[L^2(M)^K\simeq L^2\left(i\fa^*,\frac{d\lambda}{\# W|c (\lambda )|^2}\right)\, .\]
Here $c(\lambda)$ denotes the Harish-Chandra $c$-function.
We will also write $\cF(f)$ for $\widehat{f}$.

We start with the following lemma.

A connected semisimple Lie group $G$ is {\it algebraically simply connected}
if it is an analytic subgroup of the connected simply connected group
$G_\C$ with Lie algebra $\fg_\C$.  Then the analytic subgroup $K$ of $G$
for $\gk$ is compact, and every automorphism of $\fg$ integrates to an
automorphism of $G$.

\begin{lemma}\label{le-AutDynkDiagrG} Let $G/K$ be a Riemannian
symmetric space of noncompact type with $G$ simple and algebraically
simply connected.  Suppose that $\fa$ is a Cartan subalgebra of $\fg$,
i.e., that $\fg$ is a split real form of $\fg_\C$. If $\sigma :\fa\to \fa$ is
a linear isomorphism such that $\sigma'$ defines an automorphism of
the Dynkin diagram of $\Psi (\fg,\fa)$, then there exists a
automorphism $\widetilde{\sigma} :G\to G$ such that
\begin{enumerate}
\item $\widetilde{\sigma}|_\fa=\sigma$ where by abuse of notation we write
$\widetilde{\sigma}$ for $d\widetilde{\sigma}$,
\item $\widetilde{\sigma}$ commutes with the
the Cartan involution $\theta$, and in particular
$\widetilde{\sigma}(K)=K$,
\item $\widetilde{\sigma}(N)=N$.
\end{enumerate}
\end{lemma}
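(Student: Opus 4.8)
The plan is to build $\widetilde\sigma$ from its action on the root spaces and then check it integrates and has the stated compatibilities. First I would use the structure of a split real form: since $\fg$ is split, $\fa$ is a Cartan subalgebra with $\Delta(\fg_\C,\fh_\C)=\Sigma(\fg,\fa)$ realized inside $\fa^*$, and the restricted root spaces $\fg_\alpha$ are one-dimensional. Pick a Chevalley-type basis: for each simple root $\alpha\in\Psi(\fg,\fa)$ choose $0\neq E_\alpha\in\fg_\alpha$ and $F_\alpha=\theta E_\alpha\in\fg_{-\alpha}$ (up to normalization $\theta E_\alpha=-F_\alpha$), so that $\{H_\alpha, E_\alpha, F_\alpha\}$ generate $\fg$. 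Since $\sigma':\fa^*\to\fa^*$ (the dual/transpose of $\sigma$) permutes $\Psi(\fg,\fa)$ and preserves the Cartan integers $\langle\alpha,\beta^\vee\rangle$, by the isomorphism theorem for semisimple Lie algebras there is a unique Lie algebra automorphism $\tau$ of $\fg_\C$ with $\tau|_{\fa_\C}=\sigma_\C$, $\tau(E_\alpha)=E_{\sigma'\alpha}$, $\tau(F_\alpha)=F_{\sigma'\alpha}$ on simple roots. I would then argue $\tau$ preserves $\fg\subset\fg_\C$ (it sends the chosen real basis to real elements, hence preserves the real span) and so restricts to an automorphism of $\fg$, which I also call $\widetilde\sigma$; this gives (1).

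Next, for (2), I would observe that the Cartan involution $\theta$ is itself characterized, up to the right normalization, by $\theta(E_\alpha)=-F_\alpha$, $\theta(F_\alpha)=-E_\alpha$, $\theta|_\fa=-\mathrm{id}$. Since $\sigma$ is an isometry of the root system it commutes with $-\mathrm{id}$ on $\fa$, and on the Chevalley generators $\widetilde\sigma\theta(E_\alpha)=\widetilde\sigma(-F_\alpha)=-F_{\sigma'\alpha}=\theta(E_{\sigma'\alpha})=\theta\widetilde\sigma(E_\alpha)$, and similarly for $F_\alpha$ and for $\fa$; as these elements generate $\fg$ we get $\widetilde\sigma\theta=\theta\widetilde\sigma$. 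Passing to the group level (using that $G$ is algebraically simply connected, so every automorphism of $\fg$ integrates to one of $G$, and $K=G^\theta_o$ is the fixed group of the integrated $\theta$), the commuting relation $\widetilde\sigma\circ\theta=\theta\circ\widetilde\sigma$ on $G$ gives $\widetilde\sigma(K)=K$.

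For (3), I would use that $\fn=\bigoplus_{\alpha\in\Sigma^+}\fg_\alpha$ and that $\widetilde\sigma$ permutes root spaces by $\widetilde\sigma(\fg_\alpha)=\fg_{\sigma'\alpha}$. Because $\sigma'$ is a Dynkin diagram automorphism of $\Psi(\fg,\fa)$, it permutes the simple roots, hence carries the positive system $\Sigma^+$ (the non-negative integer combinations of simple roots) to itself; therefore $\widetilde\sigma(\fn)=\fn$. Exponentiating, $\widetilde\sigma(N)=\widetilde\sigma(\exp\fn)=\exp(\widetilde\sigma\fn)=\exp\fn=N$. I should also note $\widetilde\sigma(A)=A$ since $\widetilde\sigma(\fa)=\fa$, which together with (2), (3) shows $\widetilde\sigma$ respects the Iwasawa decomposition.

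The main obstacle is the very first step: producing the Lie algebra automorphism $\tau$ of $\fg_\C$ extending $\sigma_\C$ and realizing the diagram symmetry, and then checking it preserves the real form $\fg$. The isomorphism theorem handles the existence over $\C$, but one must be careful that $\sigma$ is not merely a permutation of the simple roots combinatorially but actually respects the Cartan matrix (equivalently the pairing with coroots) — this is exactly the hypothesis that $\sigma'$ is a Dynkin diagram automorphism, so it is available. The descent to $\fg$ is then a matter of exhibiting a $\theta$-stable, $\fa$-compatible generating set on which $\tau$ acts by real (indeed $\pm$-integer) coefficients; I expect this to be routine once the Chevalley basis is set up, but it is where the split-real-form and $\theta$-compatibility hypotheses are genuinely used. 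A minor point to be careful about: the normalization of $E_\alpha$ versus $F_\alpha=\theta E_\alpha$ and the signs in $\theta E_\alpha=\pm F_\alpha$ must be fixed consistently so that both the $\tau\theta=\theta\tau$ check and the reality check go through simultaneously.
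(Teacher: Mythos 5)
Your proposal is correct and follows essentially the same route as the paper: both realize $\fg$ as the standard split real form built from a Weyl/Chevalley basis, extend the diagram automorphism to a Lie algebra automorphism via the isomorphism theorem, verify commutation with $\theta$ and preservation of $\fn$ on that basis using $\sigma(\Sigma^+)=\Sigma^+$, and invoke algebraic simple connectedness to integrate to $G$. The only cosmetic difference is that you define the automorphism on the Chevalley generators attached to simple roots, whereas the paper prescribes it on the full Weyl basis $Z_\alpha\mapsto Z_{\sigma\alpha}$ and identifies $\fg$ with $\fg_0=\fa\oplus\bigoplus_\alpha\R Z_\alpha$ via uniqueness of the split real form — exactly the reality/normalization point you flag as needing care.
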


\begin{proof} The complexification of $\fa$ is a Cartan subalgebra
$\fh$ in $\fg_\C$ such that $\fh_\R=\fa$. Let
$\{Z_\alpha\}_{\alpha\in\Sigma (\fg,\fa)}$ be a
Weyl basis for $\fg_\C$ (see, for example, \cite[page 285]{V1974}).
Then (see, for example, \cite[Theorem 4.3.26]{V1974}),
\[\fg_0=\fa \oplus \bigoplus_{\alpha\in \Delta (\fg,\fh)}\R Z_\alpha
\]
is a real form of $\fg_\C$. Denote by $B$ the Killing form
of $\fg_\C$. Then $B(Z_\alpha ,Z_{-\alpha})=-1$ and
it follows that $B$ is positive definite on $\fa$ and on
$\bigoplus_{\alpha\in\Sigma^+(\fg,\fa)}\R (Z_\alpha- Z_{-\alpha})$,
and negative definite on
$\bigoplus_{\alpha\in\Sigma^+(\fg,\fa)}\R (Z_\alpha + Z_{-\alpha})$.
Hence, the map
$$\theta|_{\fa }=- \id \text{ and } \theta(Z_\alpha) = Z_{-\alpha}$$
defines a Cartan involution on $\fg_0$ such that the Cartan subalgebra
$\fa$ is contained in the corresponding $-1$ eigenspace $\fs$. As there
is (up to isomorphism) only one real form of $\fg_\C$ with Cartan
involution
such that $\fa\subset \fs$ we can assume
that $\fg=\fg_0$ and that the above Cartan involution $\theta$ is the
the one we started with.

Going back to the proof of \cite[Lemma 4.3.24]{V1974}
the map defined by
\[\widetilde{\sigma}|_\fa = \sigma \, \quad \text{and}
\quad \widetilde{\sigma}(Z_\alpha ) =Z_{\sigma {\alpha}}\]
is a Lie algebra isomorphism $\widetilde{\sigma}:\fg\to \fg$.
But then
$$
\widetilde{\sigma}(\theta (Z_\alpha)) = \widetilde{\sigma}(Z_{-\alpha})
= Z_{\sigma (-\alpha )} = Z_{-\sigma (\alpha )} =
\theta (\widetilde{\sigma }(Z_\alpha)).
$$
Finally, $\theta|_\fa = -\id$ and it follows that $\widetilde{\sigma }$
and $\theta$ commute. As
\[\fk =\bigoplus_{\alpha\in\Sigma^+(\fg,\fa)}\R (Z_\alpha +\theta
(Z_{\alpha}))\]
and $\sigma (\Sigma^+(\fg,\fa))=\Sigma^+(\fg,\fa)$ it follows that
$\widetilde{\sigma} (\fk )=\fk$.

As $\sigma(\Sigma^+(\fg,\fa))=\Sigma^+(\fg,\fa)$ it follows that
$\widetilde{\sigma}(\fn)=\fn$.

As $G$ is assumed to be algebraically simply connected, there is an
automorphism of $G$ with differential $\widetilde{\sigma}$. Denote this
automorphism also by $\widetilde{\sigma}$. It is clear that
$\widetilde{\sigma}$ satisfies the assertions of the lemma.
\end{proof}

Define an involution $\widetilde\sigma$ on $G$ in the following way: If $G_j/K_j$ is an irreducible factor of $M=G/K$ then $\widetilde \sigma|_{G_j}$ is the identity if $G_j/K_j$ is not of type $D$, otherwise it is the involution from Lemma \ref{le-AutDynkDiagrG}. Then we define $\widetilde{G}=G\rtimes \{1,\widetilde{\sigma}\}$ and $\widetilde{K}=K\rtimes \{1,\widetilde{\sigma}\}$. Note that $M=G/K =\widetilde{G}/\widetilde{K}$.

\begin{theorem}\label{th-SphericalFctTildeWInv} Let $\lambda\in\fa_\C^*$ and $x\in M$. Then
\[\varphi_\lambda ( \widetilde{\sigma}(x))=\varphi_{\sigma(\lambda
)}(x)\, .\]
If $f\in L^2(M)^{\widetilde{K}}$ then
$\widehat {f}$ is $\sigma$-invariant.
\end{theorem}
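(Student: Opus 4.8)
The plan is to show that $\widetilde{\sigma}$ respects the Iwasawa decomposition of $G$ and then to push it through the defining integral \eqref{def-spherical}. By Lemma~\ref{le-AutDynkDiagrG} and the construction preceding the theorem, $\widetilde{\sigma}$ is an automorphism of $G$ with $\widetilde{\sigma}|_{\fa}=\sigma$, $\widetilde{\sigma}\theta=\theta\widetilde{\sigma}$, $\widetilde{\sigma}(K)=K$, and $\widetilde{\sigma}(N)=N$; since $\sigma$ is a linear automorphism of $\fa$ we also get $\widetilde{\sigma}(A)=\widetilde{\sigma}(\exp\fa)=\exp(\sigma\fa)=A$. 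Hence $\widetilde{\sigma}$ maps $G=KAN$ to itself, and by uniqueness of the decomposition $k(\widetilde{\sigma}(g))=\widetilde{\sigma}(k(g))$ and $a(\widetilde{\sigma}(g))=\widetilde{\sigma}(a(g))$ for all $g\in G$. Writing $a(g)=\exp H$, this yields $a(\widetilde{\sigma}(g))^{\mu}=e^{\mu(\sigma H)}=a(g)^{\sigma\mu}$, where $\sigma$ acts on $\fa_\C^*$ dually to its action on $\fa$; as $\sigma^2=\id$, the two possible conventions for this dual action coincide, so nothing needs to be tracked.

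Next I would record that $\sigma$ fixes $\rho$. The only irreducible factors on which $\widetilde{\sigma}$ is nontrivial are those with $\Psi_{1/2}(\fg,\fa)$ of type $D$, namely $\SO(2j,\C)/\SO(2j)$ and the split form $\SO_o(j,j)/(\SO(j)\times\SO(j))$; for these $\Sigma(\fg,\fa)=\Sigma_{1/2}(\fg,\fa)$ is of type $D_j$ with all restricted root multiplicities equal, and $\sigma$ permutes $\Sigma^+(\fg,\fa)$, so $\rho=\tfrac12\sum_{\alpha\in\Sigma^+}(\dim\fg_\alpha)\,\alpha$ is $\sigma$-invariant.

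Now the computation. Since $\widetilde{\sigma}$ is an automorphism, $\widetilde{\sigma}(x)^{-1}k=\widetilde{\sigma}\bigl(x^{-1}\widetilde{\sigma}^{-1}(k)\bigr)$; substituting $k\mapsto\widetilde{\sigma}(k)$, which preserves the normalized Haar measure of the compact group $K$, and then using the relations above,
\[
\begin{aligned}
\varphi_\lambda(\widetilde{\sigma}(x))
&=\int_K a\bigl(\widetilde{\sigma}(x)^{-1}k\bigr)^{-\lambda-\rho}\,dk
 =\int_K a\bigl(\widetilde{\sigma}(x^{-1}k)\bigr)^{-\lambda-\rho}\,dk\\
&=\int_K a(x^{-1}k)^{-\sigma(\lambda)-\rho}\,dk
 =\varphi_{\sigma(\lambda)}(x),
\end{aligned}
\]
which is the first assertion.

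For the Fourier transform, note that $\widetilde{\sigma}$ descends to a diffeomorphism of $M=G/K$ preserving the invariant measure ($\widetilde{\sigma}$ preserves the Killing form of $\fg$, so $\det d\widetilde{\sigma}=\pm1$, whence Haar measure on $G$, and the induced measure on $M$, is preserved), and that $f\in L^2(M)^{\widetilde{K}}$ means precisely that $f$ is right $K$-invariant and $f\circ\widetilde{\sigma}=f$. For $f\in C_c^\infty(M)^{\widetilde{K}}$, substituting $x\mapsto\widetilde{\sigma}(x)$ in $\widehat f(\lambda)=\int_G f(x)\,\varphi_{-\lambda}(x)\,dx$ and applying the first assertion with $-\lambda$ in place of $\lambda$ gives
\[
\begin{aligned}
\widehat f(\lambda)
&=\int_G f(\widetilde{\sigma}(x))\,\varphi_{-\lambda}(\widetilde{\sigma}(x))\,dx\\
&=\int_G f(x)\,\varphi_{-\sigma(\lambda)}(x)\,dx=\widehat f(\sigma(\lambda)),
\end{aligned}
\]
and the general case follows by density of $C_c^\infty(M)^{\widetilde{K}}$ in $L^2(M)^{\widetilde{K}}$ together with $L^2$-continuity of $f\mapsto\widehat f$ and the $\sigma$-invariance of the Plancherel measure on $i\fa^*$. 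Thus $\widehat f$ is $\sigma$-invariant, hence $\widetilde{W}(\fg,\fa)$-invariant. The only step with real content is the first paragraph---that $\widetilde{\sigma}$ genuinely stabilizes $K$, $A$ and $N$ on each irreducible factor---which for type $D$ rests on the explicit Weyl-basis construction of Lemma~\ref{le-AutDynkDiagrG} (and its straightforward analogue when the factor is a complex Lie group); everything afterwards is bookkeeping, eased by $\sigma$ being an involution.
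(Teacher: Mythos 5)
Your proof is correct and follows essentially the same route as the paper: push $\widetilde{\sigma}$ through the Iwasawa decomposition to get $a(\widetilde{\sigma}(x^{-1}k))=\widetilde{\sigma}(a(x^{-1}k))$, use $\sigma(\rho)=\rho$ and the $\widetilde{\sigma}$-invariance of the normalized measure on $K$ in the defining integral for $\varphi_\lambda$, and then change variables in the spherical Fourier transform. You merely supply details the paper leaves implicit (the justification that $\sigma$ fixes $\rho$, the substitution $k\mapsto\widetilde{\sigma}(k)$, and the density argument for general $f\in L^2(M)^{\widetilde{K}}$), all of which are sound.
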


\begin{proof} Write $x=kan$, then $\wts (x)=\wts (k)\wts (a) \wts (n)$. Thus
$a(\wts (x))=\wts (a(x))$.  By
(\ref{def-spherical}) and the fact that $\sigma(\rho)=\rho$ and that the invariant measure on $K$ is $\wts$-invariant
we get
\begin{eqnarray*}
\varphi_\lambda (\wts (x))&=&
\int_K a(\widetilde{\sigma}(x^{-1})k)^{-\lambda -\rho}\, dk\\
&=&\int_K (\widetilde{\sigma}( a(x^{-1}k)))^{-\lambda -\rho} \, dk\\
&=&\int_K a(x^{-1}k)^{-\sigma \lambda -\rho}\, dk\\
&=&\varphi_{\sigma(\lambda )}(x)\, .
\end{eqnarray*}
The remaining statements are now clear.
\end{proof}

Fix a positive definite $K$--invariant bilinear form
$\langle \cdot ,\cdot \rangle $ on $\fs$. It defines an
invariant Riemannian structure on $M$ and hence also an invariant
metric $d(x,y)$. Let $x_o=eK\in M$ and for $r>0$ denote by $B_r=B_r(x_o)$
the closed ball
\[B_r =\{x\in M\mid d(x,x_o)\leqq r\}\, .\]
Note that $B_r$ is $\widetilde{K}$--invariant. Denote by $C_r^\infty (M)^{\widetilde{K}}$ the space of
smooth $\widetilde{K}$--invariant functions on $M$ with support in $B_r$.
The restriction map $f\mapsto f|_A$ is a bijection from
$C_r^\infty (M)^{\widetilde{K}}$ onto $C_r^\infty (A)^{\widetilde W }$ (using the
obvious notation).

For a finite dimensional Euclidean vector space $E$ and a closed subgroup $W$ of $\mathrm{O} (E)$ let $\PW_r(E_\C)^{W}$ be the space of holomorphic functions $F: E_\C \to \C$ such that for all $k\in \N$
\[\sup_{z\in E_\C}(1+|z|)^ke^{-r|\Im z|}|F(z)|<\infty\]
and $F(w\cdot z)=F(z)$ for all $z\in E_\C$ and $w\in W$. In particular
$\PW_r (\fa^*_\C)^{\widetilde{W}}$ is well defined.
The following is a simple modification
of the Paley-Wiener theorem of Helgason \cite{He1966,He1984} and Gangolli
\cite{Ga1971}; see \cite{OP2004} for a short overview.

\begin{theorem}[The Paley-Wiener Theorem]\label{th-IsomorphismNonCompact1} 
The Fourier transform defines  bijections
$$
 C^\infty_r (M)^{\widetilde{K}} \cong \PW_r(\fa_\C^*)^{\widetilde{W} }\, .
$$
\end{theorem}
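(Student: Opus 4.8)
The plan is to deduce the $\widetilde K$-version of the Paley--Wiener theorem from the classical spherical Paley--Wiener theorem of Helgason and Gangolli together with Theorem \ref{th-SphericalFctTildeWInv}. First I would recall that the classical result asserts that the spherical Fourier transform $f \mapsto \widehat f$ is a bijection $C_r^\infty(M)^K \to \PW_r(\fa_\C^*)^W$, where $W = W(\fg,\fa)$ is the ordinary Weyl group. Since $\widetilde K = K \rtimes \{1,\widetilde\sigma\}$ and $M = \widetilde G/\widetilde K$, a $\widetilde K$-invariant function on $M$ is exactly a $K$-invariant function $f$ that is also $\widetilde\sigma$-invariant, i.e.\ $f\circ\widetilde\sigma = f$; and $B_r$ is $\widetilde K$-invariant, so $C_r^\infty(M)^{\widetilde K} \subset C_r^\infty(M)^K$ is precisely the subspace of $\widetilde\sigma$-fixed functions. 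Likewise $\PW_r(\fa_\C^*)^{\widetilde W}$ is the subspace of $\PW_r(\fa_\C^*)^W$ consisting of those $F$ with $F\circ\sigma = F$, since $\widetilde W = W \rtimes \{1,\sigma\}$.

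The key step is then to check that the classical bijection intertwines these two involutions. By Theorem \ref{th-SphericalFctTildeWInv}, $\varphi_\lambda(\widetilde\sigma(x)) = \varphi_{\sigma(\lambda)}(x)$, so for $f \in C_r^\infty(M)^K$,
\[
\widehat{f\circ\widetilde\sigma}(\lambda) = \int_M f(\widetilde\sigma(x))\,\varphi_{-\lambda}(x)\,dx
= \int_M f(y)\,\varphi_{-\lambda}(\widetilde\sigma^{-1}(y))\,dy
= \int_M f(y)\,\varphi_{-\sigma^{-1}(\lambda)}(y)\,dy = \widehat f(\sigma^{-1}(\lambda)),
\]
using the $\widetilde\sigma$-invariance of the measure on $M$ (which follows from $\widetilde\sigma(K) = K$, $\widetilde\sigma(N) = N$ in Lemma \ref{le-AutDynkDiagrG} and $\sigma(\rho) = \rho$, exactly as in the proof of Theorem \ref{th-SphericalFctTildeWInv}) and $\sigma^2 = \mathrm{id}$. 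Hence $f$ is $\widetilde\sigma$-invariant if and only if $\widehat f$ is $\sigma$-invariant. Therefore the classical bijection $C_r^\infty(M)^K \to \PW_r(\fa_\C^*)^W$ restricts to a bijection between the $\widetilde\sigma$-fixed subspace $C_r^\infty(M)^{\widetilde K}$ and the $\sigma$-fixed subspace $\PW_r(\fa_\C^*)^{\widetilde W}$, which is the assertion.

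I expect the main obstacle to be purely bookkeeping rather than conceptual: one must make sure the extension $\widetilde\sigma$ on all of $G$ (defined factor-by-factor just before Theorem \ref{th-SphericalFctTildeWInv}) is genuinely an involution and is compatible with the Iwasawa decomposition and the normalization of measures, so that the change-of-variables computation above is legitimate; this is exactly what Lemma \ref{le-AutDynkDiagrG} and the proof of Theorem \ref{th-SphericalFctTildeWInv} provide, but in the reducible case one should note that the relevant statements hold on each irreducible factor and then the product, and that on non-$D$ factors $\widetilde\sigma$ is the identity so there is nothing to check. A secondary point is to confirm that the restriction map $f \mapsto f|_A$ identifies $C_r^\infty(M)^{\widetilde K}$ with $C_r^\infty(A)^{\widetilde W}$ (already stated in the text) and that the target space in the theorem is genuinely $\PW_r(\fa_\C^*)^{\widetilde W}$ and not something larger; both follow from the descriptions above once one knows $\widehat f$ is automatically holomorphic of the correct exponential type by the classical theorem.
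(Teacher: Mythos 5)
Your argument is correct and is exactly the ``simple modification'' of the Helgason--Gangolli theorem that the paper has in mind: the text states Theorem \ref{th-IsomorphismNonCompact1} without proof, citing \cite{He1966,He1984,Ga1971}, and the intended reduction is precisely yours, namely that the classical bijection $C_r^\infty(M)^K\cong\PW_r(\fa_\C^*)^W$ intertwines the involutions $f\mapsto f\circ\widetilde\sigma$ and $F\mapsto F\circ\sigma$ by Theorem \ref{th-SphericalFctTildeWInv}, hence restricts to a bijection of the fixed-point subspaces. Your bookkeeping points (that $\widetilde\sigma$ preserves $B_r$ and the measure, and that injectivity of $\cF$ is needed to pass back from $\widehat f\circ\sigma=\widehat f$ to $f\circ\widetilde\sigma=f$) are the right ones and are all supplied by Lemma \ref{le-AutDynkDiagrG} and the classical theorem.
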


We assume now that $M_k$ propagates $M_n$, $k\geqq n$. The index $j$ refers
to the symmetric space $M_j$, for a function $F$ on $\fa_{k,\C}^*$ let $P_{k,n}(F):=
F|_{\fa_{n,\C}}$. We fix a compatible $K$--invariant
inner products on $\fs_n$ and $\fs_k$, i.e., for all $X,Y\in\fs_n\subseteqq \fs_k$ we have
\[\langle X,Y\rangle_k=\langle X,Y\rangle_n\, .\]
We refer to \cite{OW0} for the application to injective sequences of symmetric spaces, for the injective limit of symmetric spaces of the noncompact type, see also the overview \cite{DO10} in this volume.

\begin{theorem}[\cite{OW0}]\label{th-IsomorphismNonCompact2}
Assume that $M_k$ propagates $M_n$. Let $r>0$. Then the following holds:
\begin{enumerate}
\item The map $P_{k,n}:
\PW_r (\fa_{k,\C}^*)^{\widetilde{W}(\fg_k,\fa_k)} \to
\PW_r(\fa_{n,\C}^*)^{\widetilde{W}(\fg_n,\fa_n)}$ is surjective.
\item The map $C_{k,n}=\cF^{-1}_n\circ P_{k,n}\circ \cF_k:C^\infty_{r}
(M_k)^{\widetilde K_k}
\to C^\infty_{r}(M_n)^{\widetilde K_n}$ is surjective.
\end{enumerate}
\end{theorem}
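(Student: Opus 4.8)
The plan is to reduce everything to the surjectivity of the restriction map on Weyl-group invariant polynomials (Theorem \ref{th-AdmExtG/K}(3)) combined with the Paley--Wiener characterization (Theorem \ref{th-IsomorphismNonCompact1}), and then deduce (2) from (1) by a diagram chase. First I would observe that (2) is an immediate consequence of (1): since $\cF_k$ and $\cF_n$ are bijections by Theorem \ref{th-IsomorphismNonCompact1} (applied to $M_k$ and $M_n$ respectively), the composite $C_{k,n}=\cF_n^{-1}\circ P_{k,n}\circ \cF_k$ is surjective precisely when $P_{k,n}$ restricted to the relevant Paley--Wiener spaces is surjective, which is exactly statement (1). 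So the whole content is in (1).

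For (1), the key point is to pass from invariance under the finite reflection groups $\widetilde W_k := \widetilde W(\fg_k,\fa_k)$ and $\widetilde W_n := \widetilde W(\fg_n,\fa_n)$ to an averaging argument. Given $F\in \PW_r(\fa_{n,\C}^*)^{\widetilde W_n}$, I first pick \emph{any} holomorphic extension of $F$ to $\fa_{k,\C}^*$ lying in $\PW_r(\fa_{k,\C}^*)$: concretely, write $\fa_{k,\C}^* = \fa_{n,\C}^*\oplus \fb_\C^*$ using the compatible inner products (so that the orthogonal complement $\fb$ is the span of the "added" roots), let $\pi:\fa_{k,\C}^*\to\fa_{n,\C}^*$ be the orthogonal projection, and set $F_0 := F\circ\pi$. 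Since $\pi$ does not increase $|\Re|$ or $|\Im|$ and is norm-nonincreasing, $F_0\in\PW_r(\fa_{k,\C}^*)$, and clearly $F_0|_{\fa_{n,\C}^*}=F$. The remaining task is to modify $F_0$ so that it becomes $\widetilde W_k$-invariant without destroying either the Paley--Wiener estimates or the restriction to $\fa_n$. Here I would use the structure of propagation: the kernel of the restriction homomorphism $\widetilde W_{k,n}:=\widetilde W_{\fa_n}(\fg_k,\fa_k)\to\widetilde W_n$ acts only on the complementary factor $\fb$ (this is exactly the product decomposition of $W_{k,n}$ computed case-by-case in Section \ref{sec2}, e.g. $W_{k,n}=(\gS_{k-n}\rtimes\{\pm1\}^{k-n})\times(\gS_n\rtimes\{\pm1\}^n)$ in types $B,C$), so averaging $F_0$ over that kernel leaves $F_0|_{\fa_n}=F$ unchanged while making it invariant under all of $\widetilde W_{k,n}$. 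The genuine obstacle is then: given a $\widetilde W_{k,n}$-invariant Paley--Wiener function, produce a $\widetilde W_k$-invariant one with the same restriction to $\fa_n$.

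To clear that obstacle I would invoke the polynomial restriction theorem. By Theorem \ref{th-AdmExtG/K}(3), the restriction map $\rI_{\widetilde W_k}(\fa_k)\to\rI_{\widetilde W_n}(\fa_n)$ is surjective; choosing free generators $q_1,\dots,q_n$ of $\rI_{\widetilde W_n}(\fa_n)$ and preimages $p_1,\dots,p_n\in\rI_{\widetilde W_k}(\fa_k)$ (together with generators $p_{n+1},\dots,p_k$ of the kernel ideal, which vanish on $\fa_n$), one gets a $\widetilde W_k$-equivariant polynomial map $p=(p_1,\dots,p_k):\fa_{k,\C}^*\to\C^k$ that identifies $\fa_{k,\C}^*/\widetilde W_k$ with $\C^k$ and carries $\fa_{n,\C}^*$ onto the coordinate subspace $\{p_{n+1}=\dots=p_k=0\}\cong\C^n$, compatibly with $\fa_{n,\C}^*/\widetilde W_n$. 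Since $\PW_r$ spaces of invariant functions pull back along this finite map to Paley--Wiener spaces on the quotient — the standard device used to prove the Helgason--Gangolli Paley--Wiener theorem in the first place — surjectivity of $P_{k,n}$ becomes surjectivity of restriction of holomorphic Paley--Wiener functions from $\C^k$ to the coordinate subspace $\C^n$, which is trivial (extend by the projection, as above, now in the $p$-coordinates where there are no residual invariance constraints). I expect the main technical nuisance to be checking that this change of variables is compatible with the exponential-type growth estimates uniformly in $r$ — i.e. that $p$ and a section of it are polynomially bounded in the appropriate sense so that the $\PW_r$ bound with the \emph{same} radius $r$ is preserved in both directions; this is precisely the kind of estimate that is handled in \cite{OW0}, to which the full argument is deferred.
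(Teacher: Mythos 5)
Your reduction of (2) to (1) and your overall strategy (extend, average over $\widetilde{W}_{\fa_n}(\fg_k,\fa_k)$, then use the surjectivity of restriction of invariants from Theorem \ref{th-AdmExtG/K}) matches the outline the paper gives after the theorem statement, but two of your steps have genuine gaps. First, the claim that $F_0:=F\circ\pi\in\PW_r(\fa_{k,\C}^*)$ is false: the Paley--Wiener condition requires $\sup_z(1+|z|)^k e^{-r|\Im z|}|F_0(z)|<\infty$ for every $k$, and $F\circ\pi$ is constant along the real directions of the complement $\fb^*$ (e.g.\ $F_0\equiv F(0)$ on $\fb^*$), so the rapid decay fails there. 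Producing an extension that stays in $\PW_r$ with the \emph{same} $r$ is not a triviality; the paper handles it by invoking Cowling's extension theorem \cite{cowling}, and you need that (or an equivalent device such as multiplying $F\circ\pi$ by suitable entire functions of exponential type zero that decay in the complementary real directions) at this point.

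Second, and more seriously, your passage from $\widetilde{W}_{k,n}$-invariance to $\widetilde{W}_k$-invariance via the Chevalley map $p=(p_1,\dots,p_k)$ does not work as described: $p$ is a polynomial map of degree $>1$, so $|\Im p(z)|$ is in no useful way comparable to $|\Im z|$, and there is no natural ``$\PW_r$ space on the quotient'' for which pullback along $p$ is an isomorphism preserving the radius $r$; this is also not how the Helgason--Gangolli theorem is proved. What you have labeled a ``technical nuisance'' is in fact the heart of the argument, and the paper takes a different route there: by Rais's theorem \cite{Rais} on $C^\infty$ covariants of finite reflection groups, the $\widetilde{W}_{k,n}$-invariant extension $G$ can be written as $G=\sum_j p_jG_j$ with $G_j\in\PW_r(\fa_{k,\C}^*)^{\widetilde{W}_k}$ and $p_j\in\rI_{\widetilde{W}_{k,n}}(\fa_k)$; one then uses Theorem \ref{th-AdmExtG/K} only to replace each coefficient $p_j$ by some $q_j\in\rI_{\widetilde{W}_k}(\fa_k)$ with $q_j|_{\fa_n}=p_j|_{\fa_n}$, so that $H=\sum_j q_jG_j$ is $\widetilde{W}_k$-invariant, lies in $\PW_r$ (polynomial factors preserve the estimates), and restricts to $F$. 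This decomposition is what lets the invariant-polynomial restriction theorem do its work without ever transferring exponential-type estimates through the quotient map; you should restructure the last step of your argument along these lines.
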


Let us explain the connection with Theorem \ref{th-AdmExtG/K}.
For that let $F\in \PW_r(\fa_{n,\C}^*)^{\widetilde{W}_n}$, where $\wtW_n=\wtW (\fg_n,\fa_n)$.
Then, according to a result of Cowling \cite{cowling} there exists a $G\in \PW_r(\fa_{k,\C}^*)^{\wtW_k}$ such that $G|_{\fa_{n,\C}^*}=F$. We can assume that $G$ is
invariant under $\wtW_{k,n}=\{w\in \wtW_k\mid w(\fa_n)=\fa_n\}$. As $\wtW_k$ is a finite reflection group  it follows by \cite{Rais} that there exists $G_1,\ldots ,G_{r_k}\in
\PW_r(\fa_{k,\C}^*)^{\wtW_k}$ and $p_1,\ldots ,p_{r_k}\in \rI_{\wtW_{k,n}}(\fa_{k})$ such that
\[G=p_1G_1+\ldots +p_{r_n}G_{r_n}\, .\]
As $p_j|_{\fa_n}\in \rI_{\wtW_n}(\fa_{n})$ Theorem \ref{th-AdmExtG/K} there exists $q_j\in
\rI_{\wtW_k}(\fa_k)$ such that $q_j|_{\fa_n}=p_j|_{\fa_n}$. But then
$H:=q_1G_1+\ldots + q_{r_k}G_{r_k}\in\PW_{r}(\fa_{k,\C}^*)^{\wtW_k}$ and $H|_{\fa_{n,\C}^*}=
F$ showing that the restriction map is surjective.

It is well known, \cite[Thm 5.13,p.300]{He1984}, that if $M=G/K$ is a Riemannian symmetric space of the noncompact type then there exists an algebra isomorphism $\Gamma :\mathbb{D}(M)\to \rI_W (\fa )$, where $\mathbb{D}(M)$ is the algebra of invariant differential operators, such that
\[D\varphi_\lambda =\Gamma (\lambda) \varphi_\lambda\quad \text{for all } \lambda\in \fa_{n,\C}^*\, .\]

Restricting $\Gamma $ to $\widetilde{\mathbb{D}}(M)$, the algebra of $\widetilde{G}$-invariant differential operators on $M$ then gives:
\begin{lemma} There exists an algebra isomorphism $\widetilde{\Gamma} : \widetilde{\mathbb{D}}(M)\to \rI_{\wtW}(\fa_n)$ such that for all $\lambda\in \fa_\C^*$  and $D\in\widetilde{\mathbb{D}}(M)$ we have
\[D\varphi_\lambda = \widetilde{\Gamma}(D)\varphi_\lambda\, .\]
\end{lemma}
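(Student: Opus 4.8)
The plan is to restrict the Harish-Chandra isomorphism $\Gamma : \mathbb{D}(M) \to \rI_W(\fa)$ to the subalgebra $\widetilde{\mathbb{D}}(M)$ of $\widetilde{G}$-invariant differential operators, and to identify the image as exactly $\rI_{\wtW}(\fa_n)$. The key observation is that an operator $D \in \mathbb{D}(M)$ lies in $\widetilde{\mathbb{D}}(M)$ if and only if it commutes with the action of $\widetilde\sigma$ on functions, i.e., $D(f \circ \widetilde\sigma) = (Df) \circ \widetilde\sigma$ for all $f$; this is just the definition of invariance under the extra generator $\widetilde\sigma$ of $\widetilde{G} = G \rtimes \{1, \widetilde\sigma\}$, since $D$ is already $G$-invariant.

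First I would translate this invariance condition through $\Gamma$. By Theorem \ref{th-SphericalFctTildeWInv} we have $\varphi_\lambda(\widetilde\sigma(x)) = \varphi_{\sigma(\lambda)}(x)$, so applying $D$ to both sides and using $D\varphi_\mu = \Gamma(D)(\mu)\varphi_\mu$ gives, for $D$ $G$-invariant,
\[
D\big(\varphi_\lambda \circ \widetilde\sigma\big)(x) = \Gamma(D)(\sigma(\lambda))\,\varphi_{\sigma(\lambda)}(x)
= \Gamma(D)(\sigma(\lambda))\,\varphi_\lambda(\widetilde\sigma(x)),
\]
while $\big((D\varphi_\lambda)\circ\widetilde\sigma\big)(x) = \Gamma(D)(\lambda)\,\varphi_\lambda(\widetilde\sigma(x))$. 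Since the spherical functions $\varphi_\lambda$ span a dense subspace (equivalently, since the polynomial $\Gamma(D)$ is determined by its values at all $\lambda$), the two expressions agree for all $x$ and all $\lambda$ precisely when $\Gamma(D)(\sigma(\lambda)) = \Gamma(D)(\lambda)$ for all $\lambda$, i.e., when $\Gamma(D)$ is $\sigma$-invariant as a polynomial on $\fa_\C^*$. Combined with the fact that $\Gamma(D)$ is automatically $W$-invariant, this says exactly $\Gamma(D) \in \rI_{\wtW}(\fa)$, where $\wtW = W \rtimes \{1,\sigma\}$ is the extended Weyl group introduced before Theorem \ref{th-IhIa}. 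Thus $\Gamma\big(\widetilde{\mathbb{D}}(M)\big) \subseteq \rI_{\wtW}(\fa)$.

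Next I would check surjectivity onto $\rI_{\wtW}(\fa)$. Given $p \in \rI_{\wtW}(\fa) \subseteq \rI_W(\fa)$, let $D = \Gamma^{-1}(p) \in \mathbb{D}(M)$. The computation above, run in reverse, shows that $\sigma$-invariance of $p = \Gamma(D)$ forces $D(\varphi_\lambda \circ \widetilde\sigma) = (D\varphi_\lambda)\circ\widetilde\sigma$ for every $\lambda$; since the $\varphi_\lambda$ together with their translates span a dense subspace of $C^\infty(M)^K$ (and $\widetilde\sigma$-invariant functions restricted to $A$ correspond to $\wtW$-invariant functions, as noted in the excerpt just before Theorem \ref{th-IsomorphismNonCompact1}), one concludes $D$ commutes with $\widetilde\sigma$ and hence $D \in \widetilde{\mathbb{D}}(M)$. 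Therefore $\Gamma$ restricts to a bijection $\widetilde\Gamma : \widetilde{\mathbb{D}}(M) \to \rI_{\wtW}(\fa_n)$; it is an algebra homomorphism because $\Gamma$ is, so it is an algebra isomorphism. The eigenvalue identity $D\varphi_\lambda = \widetilde\Gamma(D)\varphi_\lambda$ for $D \in \widetilde{\mathbb{D}}(M)$ is inherited verbatim from the corresponding property of $\Gamma$.

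The main obstacle is the density/separation argument used to pass from "$D(\varphi_\lambda\circ\widetilde\sigma) = (D\varphi_\lambda)\circ\widetilde\sigma$ for all $\lambda$" to "$D$ commutes with $\widetilde\sigma$" and back; one must be careful that $\widetilde\sigma$ acts on $M$ (via Lemma \ref{le-AutDynkDiagrG}, applied factor by factor) in a way compatible with the $K$-invariant functions, so that the operator identity on the spanning set of spherical functions propagates to the whole function space. The cleanest route is to work on the level of the restriction isomorphism $C^\infty(M)^K \to C^\infty(A)^W$, $f \mapsto f|_A$, under which $\mathbb{D}(M)$ acts as the ring of $W$-invariant constant-coefficient operators on $\fa$ and $\widetilde\sigma$ acts as $\sigma$; then $\widetilde\sigma$-invariance of $D$ is literally $\sigma$-equivariance of a constant-coefficient operator, which is equivalent to its symbol being $\sigma$-invariant. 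Everything else is bookkeeping with the already-established Theorems \ref{th-SphericalFctTildeWInv} and the Harish-Chandra isomorphism.
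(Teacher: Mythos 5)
Your proposal is correct and is essentially the paper's argument: the paper offers no proof beyond the remark that the lemma follows by restricting the Harish--Chandra isomorphism $\Gamma$ to $\widetilde{\mathbb{D}}(M)$, and you supply exactly the missing verification (via Theorem \ref{th-SphericalFctTildeWInv} and the injectivity of $\Gamma$, noting that $\widetilde{\sigma}D\widetilde{\sigma}^{-1}$ is again $G$-invariant) that $\widetilde{\sigma}$-invariance of $D$ is equivalent to $\sigma$-invariance of the polynomial $\Gamma(D)$. One quibble with your closing paragraph: the restriction $f\mapsto f|_A$ does \emph{not} turn $\mathbb{D}(M)$ into $W$-invariant constant-coefficient operators on $\fa$ (radial parts have non-constant coefficients; only the Harish--Chandra images, after the $\rho$-shift and projection, are constant-coefficient), but your main argument through the spherical functions does not rely on that remark.
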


\begin{theorem} Assume that  $M_k$ propagates $M_n$.  There exists a surjective algebra homomorphism $\Gamma_{k,n} : \widetilde{\mathbb{D}}(M_k)\to \widetilde{\mathbb{D}}(M_n)$ such that for all $f\in C^\infty_c(M_k)$ we have
\[C_{k,n} (Df)=\Gamma_{k,n}(D)C_{k,n}(f)\, .\]
\end{theorem}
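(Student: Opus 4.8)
The plan is to move the whole problem to the spherical Fourier transform side, where $\widetilde{G}_j$--invariant differential operators become multiplication by $\widetilde{W}(\fg_j,\fa_j)$--invariant polynomials, and then to invoke the surjectivity of restriction of invariant polynomials proved in Theorem~\ref{th-AdmExtG/K}. Write $\widetilde{\Gamma}_j:\widetilde{\mathbb{D}}(M_j)\to\rI_{\widetilde{W}(\fg_j,\fa_j)}(\fa_j)$ for the algebra isomorphism of the preceding lemma. The basic fact I would record first is that the spherical transform on $M_j$ intertwines the action of $D\in\widetilde{\mathbb{D}}(M_j)$ with multiplication by $\widetilde{\Gamma}_j(D)$, i.e.\ $\widehat{Dg}=\widetilde{\Gamma}_j(D)\cdot\widehat g$ for $g\in C_c^\infty(M_j)^{\widetilde K_j}$; this follows from the eigenfunction relation $D\varphi_\lambda=\widetilde{\Gamma}_j(D)(\lambda)\varphi_\lambda$ together with the Fourier inversion formula and the bijectivity in Theorem~\ref{th-IsomorphismNonCompact1} (cf.\ \cite{He1984}). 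Using the compatible inner products on $\fs_n\subseteq\fs_k$ to view $\fa_{n,\C}^*$ as a subspace of $\fa_{k,\C}^*$, the map $P_{k,n}\colon F\mapsto F|_{\fa_{n,\C}^*}$ appearing in $C_{k,n}=\cF_n^{-1}\circ P_{k,n}\circ\cF_k$ restricts on polynomials to the restriction homomorphism $\rI_{\widetilde{W}(\fg_k,\fa_k)}(\fa_k)\to\rI_{\widetilde{W}(\fg_n,\fa_n)}(\fa_n)$, which by Theorem~\ref{th-AdmExtG/K} is well defined and \emph{surjective}.

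With these ingredients I would simply set
\[\Gamma_{k,n}:=\widetilde{\Gamma}_n^{-1}\circ P_{k,n}\circ\widetilde{\Gamma}_k:\ \widetilde{\mathbb{D}}(M_k)\longrightarrow\widetilde{\mathbb{D}}(M_n).\]
Each of the three maps in the composition is an algebra homomorphism (restriction of polynomials is one, and $\widetilde{\Gamma}_k,\widetilde{\Gamma}_n$ are algebra isomorphisms), and $P_{k,n}$ is surjective onto $\rI_{\widetilde{W}(\fg_n,\fa_n)}(\fa_n)$; hence $\Gamma_{k,n}$ is a surjective algebra homomorphism, which is the first assertion. That $\Gamma_{k,n}(D)$ genuinely lies in $\widetilde{\mathbb{D}}(M_n)$ and not merely in $\mathbb{D}(M_n)$ is precisely the part of Theorem~\ref{th-AdmExtG/K} saying that $P_{k,n}$ carries $\widetilde{W}(\fg_k,\fa_k)$--invariants into $\widetilde{W}(\fg_n,\fa_n)$--invariants. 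Note also that $\Gamma_{k,n}$ is the unique homomorphism making the square with $\widetilde{\Gamma}_k$, $\widetilde{\Gamma}_n$ and polynomial restriction commute, so it is independent of the auxiliary choices.

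For the intertwining identity, let $f$ lie in the domain $C_r^\infty(M_k)^{\widetilde K_k}$ of $C_{k,n}$ (note $Df$ is again $\widetilde K_k$--invariant, since $D$ is $\widetilde{G}_k$--invariant, so $C_{k,n}(Df)$ is defined). Then I would unwind $C_{k,n}$ step by step: first $\cF_k(Df)=\widetilde{\Gamma}_k(D)\cdot\widehat f$; next, since restriction of a product of functions is the product of the restrictions,
\[P_{k,n}\bigl(\widetilde{\Gamma}_k(D)\cdot\widehat f\bigr)=P_{k,n}\bigl(\widetilde{\Gamma}_k(D)\bigr)\cdot P_{k,n}(\widehat f)=\widetilde{\Gamma}_n\bigl(\Gamma_{k,n}(D)\bigr)\cdot\bigl(P_{k,n}\circ\cF_k\bigr)(f),\]
the last equality by the definition of $\Gamma_{k,n}$; finally, applying $\cF_n^{-1}$ and using $\cF_n^{-1}\bigl(\widetilde{\Gamma}_n(E)\cdot\widehat g\bigr)=E\,\cF_n^{-1}(\widehat g)$ on $M_n$ with $E=\Gamma_{k,n}(D)$ and $\widehat g=(P_{k,n}\circ\cF_k)(f)$, i.e.\ $g=C_{k,n}(f)$, one gets $C_{k,n}(Df)=\Gamma_{k,n}(D)\,C_{k,n}(f)$, as desired.

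There is no serious obstacle beyond Theorem~\ref{th-AdmExtG/K}, on which everything rests. The points that require care are pure bookkeeping: identifying the two avatars of $P_{k,n}$---restriction of holomorphic functions on $\fa_{k,\C}^*$ versus restriction of invariant polynomials on $\fa_k$---which is where the compatibility of the chosen inner products on $\fs_n$ and $\fs_k$ enters, and then checking that under this identification $\widetilde{\Gamma}_k(D)|_{\fa_n}$ equals $\widetilde{\Gamma}_n(\Gamma_{k,n}(D))$ as an honest polynomial identity. Once that is set up, the computation above is mechanical.
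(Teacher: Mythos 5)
Your proposal is correct and follows essentially the same route as the paper: you define $\Gamma_{k,n}(D)=\widetilde{\Gamma}_n^{-1}\bigl(\widetilde{\Gamma}_k(D)|_{\fa_n}\bigr)$, obtain surjectivity from Theorem \ref{th-AdmExtG/K}, and verify the intertwining identity by conjugating through the spherical transform exactly as in the paper's computation. The extra bookkeeping you flag (that $Df$ stays $\widetilde K_k$--invariant, and the identification of polynomial restriction with $P_{k,n}$) is left implicit in the paper but is consistent with its argument.
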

\begin{proof} For $D\in \widetilde{\mathbb{D}}(M_k)$ define $\Gamma_{k,n} (D):=\Gamma_n^{-1}(\Gamma_k(D)|_{\fa_n})$. Then $\Gamma_{k,n}(D)\in \widetilde{\mathbb{D}}(M_n)$ and by Theorem \ref{th-AdmExtG/K} $\Gamma_{k,n} : \widetilde{\mathbb{D}}(M_k)\to \widetilde{\mathbb{D}}(M_n)$ is a surjective homomorphism.

Let $f\in C_c^\infty (M_k)$.
Then
\begin{eqnarray*}
C_{k,n} (Df)&=& \cF_{n}^{-1}(P_{k,n}(\cF_k(Df)))\\
&=& \cF_n^{-1}(\Gamma_k(D)|_{\fa_n}P_{k,n}(f)\\
&=& \Gamma_n^{-1}(\Gamma_k(D)|_{\fa_n})C_{k,n}(f)\\
&=&\Gamma_{k,n}(D)C_{k,n}(f)
\end{eqnarray*}
proving the theorem.
\end{proof}

Note, if we take $D$ to be the Laplacian $\Delta_k$ on $M_k$ then $\Gamma_{k} (D)=\lambda^2-|\rho_k|^2$ where $\lambda^2=\lambda_1^2+\ldots +\lambda_{r_k}^2$ where we write $\lambda =\lambda_1e_1^*+ \ldots +\lambda_{r_k}e_{r_k}^*$ with respect to an orhonormal basis of $\fa_k^*$. Thus
\[\Gamma_{k,n}(\Delta_k)=\Delta_n-(|\rho_k|^2-|\rho_n|^2)\, .\]

Let $M_k =\SO_o(1,k)/\SO (k)$ and $M_n =\SO (1,n)/\SO (n)$ then $\fa_k=\fa_n$, $\Sigma =\{\alpha,-\alpha\}$, $\rho_k =\frac{k}{2}\alpha$, and $\rho_n=\frac{n}{2}\alpha$. Normalizing the inner product so that $|\alpha |=1$ we get
\[|\rho_k|^2-|\rho_n|^2=\frac{1}{4}(k^2-n^2)\to \infty \text{ as } n,k\to \infty\, .\]
Hence in the limit $\Delta_\infty$ does not exists. However, the shifted Laplacian $\Delta_k-|\rho_k|^2$ has a limit as $k\to\infty$. It should be noted, that it is exactly this shifted Laplacian that plays a role in the wave equation on symmetric spaces of the noncompact type, see \cite{BOS95,OS92} and the reference therein. It is also interesting to note that in \cite{S07} the same $\rho$-shift was used in the spherical functions to study the heat equation on inductive limits of a class of symmetric spaces of the noncompact type.

\end{document}